\documentclass[a4paper,12pt]{amsart}

\usepackage[english]{babel}
\usepackage[utf8]{inputenc} 
\usepackage[T1]{fontenc}    
\usepackage[left=3cm, right=3cm, top=2cm]{geometry} 
\usepackage{indentfirst} 

\usepackage{amssymb}
\usepackage{mathtools}      
\usepackage{bbm}            
\usepackage{stmaryrd}       
\usepackage{mathrsfs}       
\usepackage{mathabx}        
\usepackage[makeroom]{cancel} 

\usepackage{palatino}       
\usepackage{microtype}      

\usepackage{graphicx}       
\usepackage{tikz}
\usepackage{tikz-cd}        
\usetikzlibrary{cd}

\usepackage{enumitem}       

\usepackage[dvipsnames]{xcolor} 
\usepackage{cite}           
\usepackage{comment}
\usepackage[normalem]{ulem} 

\theoremstyle{plain}
\newtheorem{theorem}{Theorem}[section]
\newtheorem{lemma}[theorem]{Lemma}
\newtheorem*{lemma*}{Lemma}
\newtheorem{proposition}[theorem]{Proposition}

\newtheorem{corollary}[theorem]{Corollary}

\theoremstyle{definition}
\newtheorem{definition}[theorem]{Definition}
\newtheorem{definition-notation}[theorem]{Definition/Notation}
\newtheorem{lemma-definition}[theorem]{Lemma/Definition}

\theoremstyle{remark}
\newtheorem{remark}[theorem]{Remark}

\renewcommand{\phi}{\varphi} 
\newcommand{\eps}{\varepsilon}

\newcommand{\C}{\mathbb{C}}
\newcommand{\R}{\mathbb{R}}
\newcommand{\Z}{\mathbb{Z}}

\newcommand{\K}{\mathbb{K}}

\DeclarePairedDelimiter{\lr}{(}{)}

\DeclarePairedDelimiter{\lrs}{[}{]}

\newcommand{\virg}[1]{``#1''} 

\definecolor{green}{rgb}{0,0.4,0}

\definecolor{lightgrey}{rgb}{0.7, 0.7, 0.7}

\definecolor{ballblue}{rgb}{0.0, 0.5, 1.0}

\newcommand{\comm}[1]{}

\newcommand{\cat}[1]{\mathbf{#1}}
\newcommand{\fun}[1]{\mathbf{#1}}

\DeclareMathOperator{\colim}{colim}
\newcommand{\homset}[3][]{\mathrm{Hom}_{#1}\lr{#2,#3}}

\DeclareMathOperator{\tr}{tr}
\DeclareMathOperator{\str}{str}
\newcommand{\gr}{\fun{gr}}
\newcommand{\grc}[1]{\widehat{\gr{#1}}}

\newcommand{\cl}[2][]{Cl_{#1}(#2)}

\newcommand{\rees}[1]{\mathcal{R}#1}

\newcommand{\crees}[1]{C^\omega_{\mathcal{R}}#1}
\newcommand{\creestr}[1]{C^\omega_{\mathcal{R},\tr}#1} 
\newcommand{\crki}{\crees{\K_I}}

\newcommand{\awt}[2][P]{\lr*{#2,#2_{\tr},#1,\tr_#2}} 

\newcommand{\strawt}[2][P] {\lr*{\crees{#2},\crees{#2_{\tr}},\crees{#1},\tr_{\crees{#2}}}} 

\newcommand{\commu}[1]{\{#1\}} 

\usepackage[hidelinks,draft=false]{hyperref}
\hypersetup{
    linktoc=all,    
}

\numberwithin{equation}{section}
\title{An Abstract Index Theorem via Rees Algebras}

\author{Eugenio Landi}
\address{Sapienza Universit\`a di Roma; Dipartimento di Matematica ``Guido Castelnuovo'', P.le Aldo Moro, 5 - 00185 - Roma, Italy;
}
\email{eugenio.landi@uniroma1.it}

\subjclass[2020]{Primary 58J20; Secondary 16W70, 18M05, 19K56}
\keywords{Rees algebra, filtered differential graded algebra, trace, index theorem}

\date{\today}

\begin{document}

\begin{abstract}
We develop a purely algebraic framework for index-type theorems based on the
Rees construction for filtered differential graded algebras (FDGAs).
Alongside the classical Rees module we introduce a \emph{smooth}
variant $\crees{A}$, adapted to analytic arguments, and we study traces and
their pointwise and coefficient-wise extensions to Rees algebras. The main result
(Corollary~\ref{thm.index}) is an abstract index theorem: given a compatible
datum of filtered differential graded associative algebras with traces $A$, $B$, $G$ with a morphism $\phi\colon A\to B$, an action
$\rho\colon G\otimes A\to A$ and a morphism $i\colon G\to B$, together with a
graded-central element, one has
\[
\tr_B\!\left(e^{f_0+f_1}\right)=\tr_{\grc A}\!\left(e^{\gamma}h\right),
\]
where the left-hand side is the trace in $B$ and the right-hand side the trace in
the Laurent series associated graded of $A$. 
Here $f_0+f_1$ is a curvature-type element, i.e., an element of the form $d_B\beta+\beta^2$ for some odd-degree element $\beta$, while $\gamma$ and $h$ are certain elements in $\grc A$. The formalism is modelled on the Getzler
rescaling technique and on the derivation of the localization formula for the loop space Chern character
by Ludewig and Yi.
\end{abstract}
\maketitle
\tableofcontents

\section{Introduction}\label{sec:intro}

Many index theorems share a common structure. One has an ``analytic'' quantity,
typically the trace of an exponential (e.g., a heat-kernel supertrace, or a Chern
character), and one wishes to show that it equals a
``topological'' or ``geometric'' quantity. In passing from the analytic side to the topological side, the relevant algebra (typically an algebra of functions with values in bounded operators on a (super-)Hilbert space, endowed with a natural filtration) degenerates into its associated graded algebra, that happens to be graded commutative.  
Moreover, the passage from the analytic to the
geometric side is almost always effected by a rescaling: an auxiliary
parameter $t$ is introduced, the analytic quantity is shown to be independent
of $t$, and the geometric quantity is recovered as the limit $t\to 0$. This is
precisely the mechanism of Getzler rescaling in the heat-kernel proof of the
Atiyah--Singer index theorem~\cite{Getzler1986,BGV}, and, more recently, of
the localization formula for the loop space Chern character of spin manifolds
of Ludewig and Yi~\cite{Ludewig_2023}.

The purpose of this paper is to isolate the algebra behind this mechanism. The
bookkeeping device that turns a filtration together with its associated graded
into a single object over the affine line is the \emph{Rees construction}: a
filtered object $(A,F)$ is replaced by the $\K[t]$-module
$\rees A=\bigoplus_a t^aF^aA$, whose fibre over $t=s\neq 0$ is $A$ and whose
fibre over $t=0$ is $\gr_F A$. Rescaling by $t$ is then literally the geometry
of this family, and the ``$t\to 0$ limit'' is the specialization to the central
fibre.

For the analytic arguments we need to differentiate and integrate along the
parameter, and to make sense of exponentials whose power series need not
converge in the na\"ive sense. We therefore introduce in
Section~\ref{sec:filtered-complexes} a smooth variant $\crees A$ of the Rees
module, consisting of germs of smooth $A$-valued functions on $\R_{>0}$
admitting a Laurent expansion compatible with the filtration. We then set up
traces and their two natural extensions to $\crees A$: a pointwise one,
defined pointwise in $t$, and a \emph{coefficient-wise} one, defined on Laurent
coefficients.

Section~\ref{sec:index} contains the abstract index theorem. In its simplest
form a single morphism $\phi\colon A\to B$ of
FDGAs with trace already produces an identity relating a trace in $B$ to a
trace on $\gr^0 A$. The full statement incorporates
an action of a third algebra $G$, whose r\^ole is to produce, in the limit, the
curvature-type factor $e^\gamma$; the combinatorial identity organizing the
perturbative expansion of $e^{f_0+f_1}$ is the Dyson series over simplices,
which we treat in Appendix~\ref{sec.appendix}. The crucial assumption one requires is that the element $e^{f_0+f_1}$ has constant trace. 
This may appear quite an ad hoc assumption, but there is a simple sufficient criterion for this condition to hold
(Proposition~\ref{prop:constant-trace-criterion}). Correspondingly, we give a form of the index theorem
(Corollary~\ref{thm.index-beta}) whose hypotheses involve only an odd element $\beta(t)$; the element $f_0+f_1$ whose exponential has constant trace is then simply the curvature $d_B\beta+\beta^2$ of $\beta$. We then record how the theory transports to the shifted setting, which is the one that one typically encounters in geometric applications, essentially because integration of forms on an $n$-dimensional oriented compact manifold is a linear operator of degree $-n$, and Section~\ref{sec:geometry} explains, following
\cite{Ludewig_2023,Getzler1986,BGV}, how the Clifford-order filtration on
integral kernels is an instance of the general framework presented in the main body of this article.

\medskip
\noindent\textbf{Acknowledgements.}
The author thanks Domenico Fiorenza for the helpful discussions and many valuable suggestions.

This work was supported by the \virg{National Group for Algebraic and Geometric Structures, and their Applications} (GNSAGA - INdAM).

\medskip
\noindent\textbf{Conventions.}
Throughout, $\K$ is a commutative $\R$-algebra that we take as ground ring. Complexes are cochain complexes, with differentials of degree $+1$;
filtrations are increasing and indexed by $\Z$. Unadorned tensor products are
over $\K$. We write $\cat{DGM}_\K$ for the category of differential graded
$\K$-modules, $\cat{DGM}^{\Z}_\K$ for that of its $\Z$-graded objects,
$\cat{DBiGM}_R$ for that of differential bigraded $R$-modules, and
$\cat{Vect}_\K$ for that of $\K$-modules. Commutators of homogeneous elements are graded:
$\commu{x,y}=xy-(-1)^{|x||y|}yx$.

\section{Filtered Complexes}\label{sec:filtered-complexes}

This section collects the algebraic and analytic constructions that will be
used throughout the paper. We begin with filtered complexes and their shifted
variants, then discuss the classical and smooth Rees constructions. Finally, we
introduce filtered differential graded algebras, modules, and traces.

\subsection{Filtered complexes and shifted filtrations}

\begin{definition}
A \emph{filtered complex} over $\K$ is a cochain complex $(C^\bullet,d)$ together with an increasing filtration by subcomplexes
\[
\cdots \subseteq F^{a-1}C \subseteq F^aC \subseteq F^{a+1}C \subseteq \cdots \subseteq C.
\]
We write such an object as $(C,F)$, or simply as $C$ when the filtration is understood. The filtration is called:
\begin{itemize}
    \item \emph{positive} if $F^aC=0$ for all $a<0$;
    \item \emph{exhaustive} if $\colim_a F^aC=C$;
    \item \emph{Hausdorff} if $\lim_a F^aC=0$.
\end{itemize}
Morphisms of filtered complexes are filtration-preserving cochain maps. We denote by $\cat{FDGM}_\K$ the resulting category.
\end{definition}

In the applications of this paper all filtrations are exhaustive, while Hausdorffness is not needed in general.

The forgetful functor
\[
U\colon \cat{FDGM}_\K\longrightarrow \cat{DGM}_\K
\]
has a right adjoint $\fun{Const}$, which endows a complex $C$ with the constant filtration
\[
F^a\fun{Const}(C)=C \qquad \text{for every } a\in\Z.
\]
This filtration is exhaustive, and not Hausdorff unless $C=0$.

For an exhaustive filtration, every element of $C$ belongs to $F^nC$ for some $n\in \mathbb{Z}$. One then has a well defined \emph{order} function
\[
\mathrm{ord}\colon C\to \Z\cup\{-\infty\}
\]
given by $\mathrm{ord}(c)=\inf\{n\in \mathbb{Z}\,|\, c\in F^nC\}$.

\begin{definition-notation}\label{def-not:shift}
Given a filtered complex $(C,F)$ and an integer $n$, the \emph{$n$-shifted filtration} on $C$ is the filtration $F[n]$ defined by
\[
F[n]^aC\coloneqq F^{a+n}C.
\]
Thus shifting the filtration by $n$ lowers the order of every element by $n$.
\end{definition-notation}

\begin{definition}
The \emph{associated graded} of a filtered complex $(C,F)$ is the graded complex
\[
\gr_F C\coloneqq \bigoplus_{a\in\Z}\gr_F^a C,
\qquad
\gr_F^a C\coloneqq F^aC/F^{a-1}C.
\]
When no confusion is possible we simply write $\gr C$.
\end{definition}

\begin{definition}\label{def:FDGM-monoidal}
The category $\cat{FDGM}_\K$ is a closed symmetric monoidal category. The tensor product is given by
\[
(C,F)\otimes (D,G)\coloneqq (C\otimes D,F\otimes G),
\]
where
\[
(F\otimes G)^a(C\otimes D)\coloneqq \sum_{i\in\Z}F^iC\otimes G^{a-i}D\subseteq C\otimes D.
\]
The internal hom is
\[
[(C,F),(D,G)]\coloneqq ([C,D],[F,G]),
\]
with filtration
\[
[F,G]^a[C,D]\coloneqq \homset[\cat{FDGM}_\K]{(C,F)}{(D,G[a])}.
\]
The monoidal unit is the ground ring endowed with the positive filtration
\[
I^a\K=
\begin{cases}
0,& a<0,\\
\K,& a\geq 0,
\end{cases}
\]
and will be denoted by $\K_I$.
\end{definition}

For a detailed discussion of this monoidal structure we refer to \cite{brotherston2024monoidal}. 
By construction, the monoidal category $\cat{FDGM}_\K$ is a monoidal subcategory of the symmetric monoidal category of cochain complexes, and so it is in particular symmetric monoidal. 
The associated graded functor $\gr\colon\cat{FDGM}_\K\to\cat{DGM}_\K^{\Z}$ is (strong) symmetric monoidal; in particular $\gr$ sends monoids to monoids.

\subsection{Classical and smooth Rees constructions}

\begin{definition}\label{def:rees}
Let $(C,F)$ be a filtered complex. Its \emph{Rees module} is the bigraded complex
\[
\rees{(C,F)}\coloneqq \bigoplus_{a\in\Z} t^aF^aC \subseteq C[t,t^{-1}].
\]
The cohomological degree is the one coming from $C$, while the filtration degree is recorded by the power of $t$. When the filtration $F$ is clear from the context, we will simply write $\rees{C}$ for $\rees{(C,F)}$.
\end{definition}

\begin{proposition}\label{prop.monoidality-rees}
The Rees construction defines a monoidal functor
\[
\rees{}\colon \cat{FDGM}_\K\longrightarrow \cat{DBiGM}_{\K[t]},
\]
and for every $s\in\K$ there are evaluation functors
\[
\bigr\vert_s\colon \cat{DBiGM}_{\K[t]}\longrightarrow
\begin{cases}
\cat{FDGM}_\K,& s\neq 0,\\
\cat{DBiGM}_\K,& s=0,
\end{cases}
\]
induced by base change along $\K[t]\to \K$, $t\mapsto s$.
Moreover, if $(C,F)$ is a filtered complex, then
\[
\rees{C}\bigr\vert_s\cong C \quad (s\neq 0),
\qquad
\rees{C}\bigr\vert_0\cong \gr_F C.
\]
\end{proposition}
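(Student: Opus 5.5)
The plan is to verify the three assertions in turn: functoriality and monoidality of $\rees{}$, the existence of the evaluation functors, and the identification of the fibres.

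\emph{Functoriality and monoidality.} A morphism $f\colon (C,F)\to(D,G)$ is filtration preserving, so the $\K[t,t^{-1}]$-linear extension $\sum t^a c_a\mapsto \sum t^a f(c_a)$ sends $t^aF^aC$ into $t^aG^aD$. It commutes with the differential, which acts coefficient-wise because each $F^aC$ is a subcomplex. The module $\rees C$ is a $\K[t]$-submodule of $C[t,t^{-1}]$: multiplication by $t$ sends $t^aF^aC$ to $t^{a+1}F^aC\subseteq t^{a+1}F^{a+1}C$ since the filtration is increasing. It is bigraded by cohomological degree and $t$-degree. For the monoidal structure I would define
\[
\rees C\otimes_{\K[t]}\rees D\to \rees{(C\otimes D)},\qquad t^ac\otimes t^b d\mapsto t^{a+b}(c\otimes d).
\]
This lands in the right place because $F^aC\otimes G^bD\subseteq (F\otimes G)^{a+b}$, and it is $\K[t]$-balanced. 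The unit map $\K[t]\to\rees{\K_I}=\bigoplus_{a\ge0}t^a\K$ is an isomorphism. Associativity, unitality and symmetry (including the Koszul sign) are inherited from those of $C[t,t^{-1}]$, i.e.\ from the symmetric monoidal structure of complexes. So $\rees{}$ is lax symmetric monoidal, which is all the statement asks. If strong monoidality were wanted, one would check surjectivity termwise from the sum defining $(F\otimes G)^{a}$; injectivity needs flatness hypotheses, and I would not claim it.

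\emph{Evaluation functors.} Base change $M\mapsto M\otimes_{\K[t]}\K_s$ along $t\mapsto s$ is functorial. For $s=0$, the $t$-grading descends because the ideal $(t)$ is homogeneous, so the result lies in $\cat{DBiGM}_\K$. For $s\neq 0$ the grading does not descend. The filtration on $M|_s$ should be defined as the image of $\bigoplus_{b\le a}M^{\bullet,b}$; for a Rees module this will recover $F$.

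\emph{Fibres.} For $s\neq0$, define $\rees C\to C$ by $\sum t^ac_a\mapsto\sum s^a c_a$ (this needs $s$ invertible, which I will assume is intended by ``$s\neq0$''). The map is surjective because $C$ is exhaustive: each $c$ lies in some $F^aC$ and is the image of $s^{-a}t^ac$. I will show its kernel is $(t-s)\rees C$ by induction on the number of nonzero terms. Given $\sum t^ac_a$ with $\sum s^ac_a=0$, subtract $(t-s)\,t^{m}c_{m}$ with $m$ minimal; this moves the lowest term up one degree using $F^{m}\subseteq F^{m+1}$, and one finishes at the top term. The same computation shows the filtration obtained in the previous step is $F$. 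For $s=0$, one has $\rees C/t\rees C=\bigoplus_a t^aF^aC/t^{a}F^{a-1}C=\gr_FC$ directly, compatibly with differentials and bigrading.

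The main obstacle is the $s\neq0$ kernel computation together with making precise the filtration on the fibre, since the statement leaves the meaning of ``$s\neq0$'' (invertibility in $\K$) implicit. I would state that assumption explicitly.
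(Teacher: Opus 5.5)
Your proposal is correct and follows the same route as the paper. Monoidality comes straight from the tensor-product filtration, the fibre at $s\neq 0$ is obtained via $t^ac\mapsto s^ac$ with exhaustiveness giving surjectivity, and at $s=0$ one has $\rees{C}/t\rees{C}=\bigoplus_a t^aF^aC/F^{a-1}C=\gr_F C$. Your added caveats are legitimate refinements that the paper leaves implicit: $s$ must be a unit of $\K$, and without flatness one only gets a lax monoidal structure.

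There is one small slip in the kernel induction. To kill the lowest term you must subtract $(t-s)\bigl(-s^{-1}t^{m}c_{m}\bigr)$, not $(t-s)t^{m}c_{m}$. The latter would leave $(1+s)c_{m}$ in degree $m$.
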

\begin{proof}
Monoidality follows directly from the definition of the tensor product filtration. The specialization at $s\neq 0$ identifies $t^a x$ with $s^a x$, hence recovers $C$ with its underlying cochain complex (using exhaustiveness for the identification of the union of the $F^aC$ with $C$). For $s=0$, 
one has
\begin{align*}
\rees{C}\otimes_{\K[t]}\K&=\rees{C}/t\rees{C}
=\bigoplus_{a\in \Z}t^a(F^aC/F^{a-1}C)
=\bigoplus_{a\in \Z}t^a\gr_F^a C
=\gr_FC,
\end{align*}
where $\K$ is seen as a $\K[t]$-module via $t\mapsto 0$.
\end{proof}

The classical Rees module keeps track of the filtration algebraically. For the analytic arguments in the later sections we also need a smooth variant.

\subsection{$C^\omega$ Rees module}

\begin{remark}\label{rem:analytic-assumptions}
Since $\K$ is an $\R$-algebra, every $\K$-module $V$ is a $\R$-vector space. 
Whenever we discuss smooth Rees modules, we will assume the real vector space $V$ is a separated locally convex space, so that the spaces of smooth maps $C^\infty(\R_{>0},V)$ can be defined in the usual sense (see \cite{schwartz}).
\end{remark}

\begin{definition}\label{def:smooth-rees}
Let $(C,F)$ be a filtered complex. We denote by $C^\omega(C)$ the subcomplex of $C^\infty(\R_{>0},C)$ consisting of those maps $a(t)$ for which there exist $\varepsilon>0$, an integer $N$, and coefficients $a_n\in C$ such that
\[
a(t)=\sum_{n\geq N} a_n t^n
\qquad \text{for } 0<t<\varepsilon.
\]
To avoid cumbersome notation, we will often write  $a(t)=\sum_{n\geq N} a_n t^n$ for an element in $C^\omega(C)$, leaving it understood that the identity only holds in $0<t<\varepsilon$, for some $\varepsilon>0$.
The \emph{smooth Rees module} of $(C,F)$ is the subcomplex
\[
\crees{C}\coloneqq \left\{a(t)=\sum_{n\geq N} a_nt^n\in C^\omega(C)\,\middle|\, a_n\in F^nC \text{ for every } n\right\}.
\]
\end{definition}
 
\begin{proposition}\label{prop.smooth-monoidal}
The assignment $(C,F)\mapsto \crees{(C,F)}$ is a lax monoidal functor. The natural morphism
\[
\crees{(C,F)}\otimes_\K \crees{(D,G)}\longrightarrow \crees{\lr{C\otimes D,F\otimes G}}
\]
is given by pointwise tensor product, and $\crees{\K_I}$ is the commutative algebra of smooth $\K$-valued functions on $(0,\infty)$ admitting a Taylor expansion in a right neighborhood of $0$. In particular, for an element $a(t)=\sum_{n\geq0}a_nt^n$ of $\crees{\K_I}$ the limit $\lim_{t\to 0^+}a(t)$ exists and equals the constant Taylor coefficient, $\lim_{t\to 0^+}a(t)=a_0$.
\end{proposition}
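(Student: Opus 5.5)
The plan is to check directly that pointwise tensor product defines a map into $\crees{(C\otimes D)}$, that it is natural and satisfies the lax monoidal coherence axioms, and then to identify $\crees{\K_I}$. Functoriality comes first. A filtered morphism $f\colon (C,F)\to(D,G)$ acts by postcomposition, $a(t)\mapsto f(a(t))$. For this to be well defined I will assume, in line with Remark~\ref{rem:analytic-assumptions}, that morphisms are continuous linear maps, so that $f\circ a$ is again smooth. On $0<t<\varepsilon$ we then have $f(a(t))=\sum_n f(a_n)t^n$, and $f(a_n)\in G^nD$. The differential commutes with postcomposition by the same argument, so we get a functor into complexes.

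For the lax structure, given $a(t)=\sum_{n\ge N}a_nt^n$ in $\crees C$ and $b(t)=\sum_{m\ge M}b_mt^m$ in $\crees D$, I set $(a\otimes b)(t)\coloneqq a(t)\otimes b(t)$. On $0<t<\min(\varepsilon_a,\varepsilon_b)$ the Cauchy product gives
\[
a(t)\otimes b(t)=\sum_{k\ge N+M}\Bigl(\sum_{n+m=k}a_n\otimes b_m\Bigr)t^k .
\]
Each inner sum is finite because both expansions are bounded below. Its terms lie in $F^nC\otimes G^{k-n}D\subseteq (F\otimes G)^k(C\otimes D)$, by Definition~\ref{def:FDGM-monoidal}.

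Smoothness of $t\mapsto a(t)\otimes b(t)$ is where I expect the main obstacle, because it needs a topology on $C\otimes D$. I will equip it with the projective tensor topology, for which $\otimes$ is jointly continuous bilinear. Smoothness then follows from the Leibniz rule for continuous bilinear maps applied to smooth curves, and convergence of the Cauchy product follows from the same continuity together with absolute convergence of the two series on a slightly smaller interval.

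The map is $\K$-bilinear, so it descends to $\crees C\otimes_\K\crees D$. It is a chain map by the Koszul sign rule, applied pointwise. Naturality is immediate. Associativity and symmetry reduce pointwise to those of $\cat{DGM}_\K$. The unit map $\K_I\to\crees{\K_I}$ sends a scalar to the constant function, which lies in the module because $\K=I^0\K$.

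For the identification of $\crees{\K_I}$: since $I^n\K=0$ for $n<0$, all negative coefficients vanish. Hence the elements are exactly the smooth $\K$-valued functions on $(0,\infty)$ that equal a power series $\sum_{n\ge0}a_nt^n$ near $0$, and pointwise multiplication makes this a commutative algebra. Finally, a power series converging on $(0,\varepsilon)$ converges uniformly on $[0,\varepsilon']$ for any $\varepsilon'<\varepsilon$, so it defines a continuous function there. This gives $\lim_{t\to0^+}a(t)=a_0$, and differentiating term by term shows that these coefficients are the Taylor coefficients.
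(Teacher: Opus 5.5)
Your proposal is correct and follows the paper's proof: the Cauchy-product computation showing that each coefficient $\sum_{n+m=k}a_n\otimes b_m$ lies in $(F\otimes G)^k(C\otimes D)$, constant functions as the unit, and the direct identification of $\crees{\K_I}$ from the vanishing of the negative coefficients. The analytic points you make explicit are left implicit in the paper, so they refine its argument rather than replace it: continuity of morphisms and of $d$, a jointly continuous topology such as the projective one on $C\otimes D$ (giving smoothness and convergence of the pointwise product), and the Abel-type uniform convergence that yields $\lim_{t\to0^+}a(t)=a_0$.
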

\begin{proof}
If
\[
a(t)=\sum_{i\geq N} a_it^i \in \crees{(C,F)},
\qquad
b(t)=\sum_{j\geq P} b_jt^j \in \crees{(D,G)},
\]
then
\[
a(t)\otimes b(t)=\sum_{n\geq N+P}\left(\sum_{i+j=n} a_i\otimes b_j\right)t^n.
\]
The inner sums are finite. Since $a_i\in F^iC$ and $b_j\in G^jD$, each coefficient belongs to $(F\otimes G)^n(C\otimes D)$. The unit map $\K\to\crees{\K_I}$ sends a scalar to the corresponding constant function. Naturality is straightforward. The statements concerning  $\crees{\K_I}$ are immediate.
\end{proof}
\begin{corollary}
The functor $\crees\colon\cat{FDGM}_\K\to\cat{Vect}_\K$ refines to a functor
\[
\crees\colon\cat{FDGM}_\K\to\cat{DGM}_{\crees{\K_I}}.
\]
\end{corollary}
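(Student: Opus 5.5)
The plan is to use the standard fact that a lax monoidal functor carries monoids to monoids and modules to modules over them. I will apply this to the functor $\crees{}$ of Proposition~\ref{prop.smooth-monoidal}. The target category there is implicitly $\cat{DGM}_\K$.

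\textbf{Step 1: $\crees{\K_I}$ is a commutative differential graded algebra.} The unit $\K_I$ is a commutative monoid in $\cat{FDGM}_\K$, with multiplication $\K_I\otimes\K_I\to\K_I$ given by multiplication in $\K$. This map is filtration preserving, because $I^i\K\cdot I^j\K\subseteq I^{i+j}\K$. Applying $\crees{}$ and composing with the lax structure map gives the pointwise product of functions. By Proposition~\ref{prop.smooth-monoidal}, $\crees{\K_I}$ is exactly the commutative algebra of smooth functions with a Taylor expansion at $0^+$. It sits in degree $0$ with zero differential.

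\textbf{Step 2: a $\crees{\K_I}$-action on $\crees{C}$.} Every filtered complex $C$ is a module over the monoid $\K_I$ via the unitor $\K_I\otimes C\cong C$. This map is filtered, since $I^i\K\cdot F^{a-i}C\subseteq F^aC$ for $i\ge 0$. Applying $\crees{}$ and precomposing with the lax structure map of Proposition~\ref{prop.smooth-monoidal} gives
\[
\crees{\K_I}\otimes_\K\crees{C}\to\crees{(\K_I\otimes C)}\cong\crees{C},
\]
which is $(f\cdot a)(t)=f(t)a(t)$.

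Concretely, this is a direct check. Take $f=\sum_{i\ge0}f_it^i$ and $a=\sum_{n\ge N}a_nt^n$. Then $fa$ is smooth on $\R_{>0}$, being a product of a smooth scalar function and a smooth vector-valued function. Near $0$ it has the expansion $\sum_n\bigl(\sum_{i+j=n}f_ia_j\bigr)t^n$, valid for $t$ below the minimum of the two radii. Each coefficient lies in $F^nC$, because $F^jC\subseteq F^nC$ for $j\le n$. This is where the \emph{positivity} of the filtration on $\K_I$ enters.

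\textbf{Step 3: compatibility with the differential and with morphisms.} The differential on $\crees{C}$ is pointwise $d$, so $d(fa)=f\,da$ because $f$ is scalar of degree $0$. Hence $\crees{C}$ is a dg module. Associativity and unitality of the action follow from the coherence axioms of a lax monoidal functor, or directly from the pointwise formula. For a filtered morphism $\phi\colon C\to D$, the induced map $\crees{\phi}$ acts pointwise, so it commutes with multiplication by $f$ and is $\crees{\K_I}$-linear. Functoriality is inherited from that of $\crees{}$.

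\textbf{Main obstacle.} The only genuinely non-formal point is that, in the analytic setting, a convergent expansion of $f$ times a convergent expansion of $a$ must converge to $fa$ in the locally convex space $C$. This holds because multiplication $\R\times C\to C$ is continuous and bilinear. Products of absolutely convergent scalar series with convergent series in $C$ can be rearranged by the Cauchy product argument, for $t$ strictly inside the common domain of validity. I would handle this exactly as the implicit argument in the proof of Proposition~\ref{prop.smooth-monoidal}, applied to the bilinear map $\K\times C\to C$.
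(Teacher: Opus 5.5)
Your proof is correct and follows the paper's argument: the lax monoidal functor $\crees{}$ sends the unit monoid $\K_I$ to the monoid $\crees{\K_I}$, and it sends each filtered complex $C$, which is canonically a $\K_I$-module via the unitor, to a $\crees{\K_I}$-module. Your explicit coefficient check, which uses positivity of the filtration on $\K_I$ to get $\sum_{i+j=n}f_ia_j\in F^nC$, and your convergence remark only unpack details the paper leaves implicit.
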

\begin{proof}
A lax monoidal functor sends the monoidal unit to a monoid and modules over the unit to modules over that monoid. As $\K_I$ is the monoidal unit of $\cat{FDGM}_\K$ and every object is canonically a $\K_I$-module, the claim follows from Proposition~\ref{prop.smooth-monoidal}.
\end{proof}

In moving from $\rees{}$ to $\crees{}$ we have in particular moved from Laurent polynomials to Laurent series. Accordingly, we introduce a `Laurent series version' of the associated graded.
\begin{definition}\label{def:completed-graded}
Let $(A,F)$ be a filtered module. We set
\[
\grc{A}\coloneqq \bigoplus_{n<0}\gr^n_FA\ \oplus\ \prod_{n\geq 0}\gr^n_FA.
\]
and call it the \emph{Laurent series associated graded} of $A$.
\end{definition}

\subsection{Filtered differential-graded algebras}

\begin{definition}
A \emph{(non-unital) filtered differential-graded $\K$-algebra} (FDGA) is a semigroup object in $\cat{FDGM}_\K$. Equivalently, it is a filtered complex $(A,F)$ endowed with an associative (degree zero) multiplication
\[
\cdot\colon (A,F)\otimes (A,F)\longrightarrow (A,F).
\]
\end{definition}

\begin{proposition}\label{prop:rees-of-fdga}
Let $(A,F)$ be a filtered differential graded algebra. Then $\rees{A}$ is a differential bigraded algebra and $\crees{A}$ is a differential graded algebra which is a module over $\crees{\K_I}$.
 
\end{proposition}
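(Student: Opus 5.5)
The plan is to deduce everything from the monoidality statements already established: Proposition~\ref{prop.monoidality-rees} for $\rees{}$ and Proposition~\ref{prop.smooth-monoidal} (with its corollary) for $\crees{}$. The general principle is that a (lax) monoidal functor sends semigroup objects to semigroup objects. An FDGA is by definition a semigroup object $(A,F)$ with multiplication $\mu\colon (A,F)\otimes(A,F)\to(A,F)$ in $\cat{FDGM}_\K$.

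For the classical Rees module, I define the product on $\rees{A}$ as the composite
\[
\rees{A}\otimes_{\K[t]}\rees{A}\to\rees{(A\otimes A)}\xrightarrow{\rees{\mu}}\rees{A}.
\]
Concretely, this is $(t^ax)(t^by)=t^{a+b}xy$. It is well defined because $F^aA\cdot F^bA\subseteq F^{a+b}A$, which is exactly the statement that $\mu$ is filtration preserving for the tensor filtration of Definition~\ref{def:FDGM-monoidal}. Associativity follows from associativity of $\mu$, or equivalently from the coherence of the monoidal structure map. The product is bihomogeneous: cohomological degrees add since $\mu$ has degree zero, and filtration degrees add because the powers of $t$ multiply. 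The differential $d(t^ax)=t^a\,dx$ preserves $\rees{A}$ because each $F^aA$ is a subcomplex. The graded Leibniz rule is inherited from $A$, since $t$ has cohomological degree $0$ and introduces no signs.

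For the smooth Rees module, I take the pointwise product $(a\cdot b)(t)=a(t)b(t)$, which is the composite of the lax structure map of Proposition~\ref{prop.smooth-monoidal} with $\crees{\mu}$. Three things need to be checked.
\begin{itemize}
\item \emph{Smoothness.} The product $t\mapsto a(t)b(t)$ is smooth on $\R_{>0}$. This needs $\mu$ to be jointly continuous, or at least hypocontinuous, on the locally convex space $A$ under the standing assumptions of Remark~\ref{rem:analytic-assumptions}.
\item \emph{Laurent expansion.} On $(0,\min(\eps_a,\eps_b))$ the product has the Cauchy-product expansion computed in the proof of Proposition~\ref{prop.smooth-monoidal}. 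Each coefficient is a finite sum, and it lies in $F^nA$ by filtration compatibility.
\item \emph{Differential and module structure.} The differential, applied pointwise, is a derivation. The pointwise product is $\crees{\K_I}$-bilinear because $\mu$ is $\K$-bilinear and scalar functions commute with everything.
\end{itemize}
Associativity then holds pointwise.

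The main obstacle is the analytic one: identifying the product of the two convergent Laurent series with their Cauchy product, and ensuring smoothness of the product. This requires continuity of multiplication, which is not automatic for an arbitrary separated locally convex algebra. I would first try to handle it by interpreting Remark~\ref{rem:analytic-assumptions} as including continuity of the structure maps, as is implicitly used already in Proposition~\ref{prop.smooth-monoidal}. With that assumption, the argument reduces to the formal computation already given there.
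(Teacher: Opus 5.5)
Your proposal is correct and follows the paper's route. The paper's proof is the one line ``immediate from Proposition~\ref{prop.monoidality-rees} and Proposition~\ref{prop.smooth-monoidal}'', which is exactly your principle that (lax) monoidal functors such as $\rees{}$ and $\crees{}$ carry semigroup objects to semigroup objects; you unpack it explicitly, and the continuity of multiplication you flag is indeed an unstated assumption in the paper, already needed for the pointwise tensor product in Proposition~\ref{prop.smooth-monoidal} to make sense.
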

\begin{proof}
Immediate from Proposition~\ref{prop.monoidality-rees} and Proposition~\ref{prop.smooth-monoidal}.
\end{proof}
 
\begin{definition}\label{def:shifted-fdga}
In view of geometric applications, it is useful to also consider $n$-shifted filtered differential graded algebras, for some $n\in \Z$, i.e., filtered complexes $(A,F)$ together with a (shifted) associative product
\[
\cdot\colon (A,F)\otimes (A,F)\longrightarrow (A,F[n]).
\]
Equivalently, $(A,F)$ is an $n$-shifted FDGA if and only if $(A,F[-n])$ is an ordinary FDGA.
\end{definition}
 
\begin{remark}\label{rem:shifted-rees}
Applying the Rees construction to the
 FDGA $A[-n]$ and recalling that $F[-n]^aA=F^{a-n}A$, gives, as a submodule of $A[t,t^{-1}]$,
\[
\rees{A[-n]}=\bigoplus_{a\in\Z}t^aF^{a-n}A=t^{\,n}\,\rees{A}.
\]
This is a differential bigraded $\K[t]$-algebra: for $u\in F^{a-n}A$ and
$v\in F^{b-n}A$ one has $uv\in F^{a+b-n}A$, so
$(t^au)(t^bv)=t^{a+b}(uv)$ lies in $t^{a+b}F^{a+b-n}A\subseteq\rees{A[-n]}$.

Equivalently, one may transport the product on $\rees{A[-n]}$ to a product on $\rees{A}=\bigoplus_a t^aF^aA$ via the identification $\rees{A[-n]}=t^{n}\rees{A}$; the
transported product then raises the $t$-degree by $n$,
\[
(t^a x)\cdot (t^b y)=t^{a+b+n}(xy)
\qquad (x\in F^aA,\ y\in F^bA),
\]
so that multiplication has bidegree $(0,n)$ in (cohomological, filtration)
degree. Both descriptions specialize to the usual Rees algebra in the unshifted case, i.e., when $n=0$.
\end{remark}

\begin{proposition}\label{prop.monoidality-eval}
For any $s\neq 0$ the functors $-\otimes_{\K[t]}\K\vert_s\colon\cat{DBiGM}_{\K[t]}\to\cat{FDGM}_\K$ are monoidal; the functor $-\otimes_{\K[t]}\K\vert_0\colon\cat{DBiGM}_{\K[t]}\to\cat{DBiGM}_\K$ is monoidal.
\end{proposition}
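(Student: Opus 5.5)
The plan is to reduce everything to the standard fact that base change along a ring map is a strong monoidal functor between module categories, and then check that it respects the extra structure, namely the bigrading, the differential, and, for $s\neq 0$, the filtration.

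\textbf{Underlying modules.} Fix $s\in\K$ and write $\K_s$ for $\K$ viewed as a $\K[t]$-algebra via $t\mapsto s$. For $\K[t]$-modules $M,N$, the canonical isomorphism
\[
(M\otimes_{\K[t]}N)\otimes_{\K[t]}\K_s\;\cong\;(M\otimes_{\K[t]}\K_s)\otimes_{\K}(N\otimes_{\K[t]}\K_s),\qquad (m\otimes n)\otimes\lambda\mapsto (m\otimes\lambda)\otimes(n\otimes 1),
\]
is natural. It satisfies the associativity and unit coherences because both sides are quotients of $M\otimes_\K N$ by the obvious relations. The unit is preserved: $\K[t]\otimes_{\K[t]}\K_s\cong\K$. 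The symmetry is preserved as well. The Koszul sign depends only on the cohomological degrees, and these are unchanged by base change.

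\textbf{Differentials and gradings.} The differentials on objects of $\cat{DBiGM}_{\K[t]}$ are $\K[t]$-linear, so $d\otimes\mathrm{id}$ is well defined on $M\otimes_{\K[t]}\K_s$. The isomorphism above intertwines the Leibniz differentials, since both are computed on representatives in $M\otimes_\K N$. For $s=0$ the filtration bigrading descends, because $t$ is homogeneous of filtration degree $1$. Hence $M/tM$ is bigraded, and the monoidal isomorphism is bihomogeneous, because the tensor bigrading is additive in both cases. This settles the $s=0$ statement.

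\textbf{Filtration for $s\neq 0$.} This is the step I expect to need care. For $s\neq0$ I would filter $M|_s$ by
\[
F^a(M|_s)\coloneqq\text{image of }\bigoplus_{b\leq a}M^{\ast,b}\text{ in }M|_s,
\]
the analogue of the identification $t^a x\mapsto s^a x$ in Proposition~\ref{prop.monoidality-rees}. With this definition $\rees{C}|_s\cong C$ recovers $F$. The differential preserves $F^a$ because it has filtration degree $0$.

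Next I would check that the base-change isomorphism is a filtered isomorphism onto $(M|_s\otimes N|_s, F\otimes G)$. The image of $\bigoplus_{b\leq a}(M\otimes N)^{\ast,b}$ is spanned by images of $m\otimes n$ with $\deg m+\deg n\leq a$. These are exactly the sums of $F^iM|_s\otimes F^{a-i}N|_s$, which gives both inclusions. Here I use that the filtration is increasing: $M^{b}$ with $b<i$ still maps into $F^i$.

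The only subtle point is that, for an arbitrary bigraded $\K[t]$-module, multiplication by $t$ might not inject. Images can collapse, but that affects neither inclusion, since everything is defined via images. The unit $\K[t]$ goes to $\K$ with image filtration $\K_I$, because $\K[t]$ sits in filtration degrees $\geq 0$ and contains $1$ in degree $0$.
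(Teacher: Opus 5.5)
Your proposal is correct and takes the same approach as the paper: base change along $\K[t]\to\K$, $t\mapsto s$, is strong (symmetric) monoidal, and one then checks that the differential and the bigrading are respected. For $s=0$ the bigrading passes to the quotient because $t$ is homogeneous, so $tM$ is a bigraded submodule. For $s\neq 0$ you also supply what the paper's three-line sketch leaves implicit: an explicit filtration on $M|_s$ by the images of $\bigoplus_{b\le a}M^{\ast,b}$, together with the check in both directions that the comparison map identifies it with the tensor filtration, hence is an isomorphism in $\cat{FDGM}_\K$ (this part does not even need $s$ to be invertible).
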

\begin{proof}
Base change along a morphism of commutative rings is symmetric monoidal. For $s\neq 0$, the specialization $t\mapsto s$ forgets the filtration grading and recovers the underlying filtered complex; for $s=0$, it kills the ideal $(t)$ and therefore returns the associated graded object.
\end{proof}

\begin{proposition}\label{prop.monoidality-eval-smooth}
For every $s>0$, evaluation at $t=s$ defines a monoidal functor
\[
\bigr\vert_s\colon \cat{DGM}_{\crees{\K_I}}\longrightarrow \cat{FDGM}_\K.
\]
Moreover, the quotient by $(t)$ defines a monoidal functor
\[
\bigr\vert_0\colon \cat{DGM}_{\crees{\K_I}}\longrightarrow \cat{DGM}_\K,
\qquad
M\longmapsto M/tM.
\]
On $\crees{A}$ these give $\crees{A}\bigr\vert_s\cong A$ for $s>0$ and $\crees{A}\bigr\vert_0\cong \grc{A}$.
\end{proposition}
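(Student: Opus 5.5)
The plan has three parts: monoidality of the two functors, the identification of the fibre at $s>0$, and the identification of the fibre at $0$.

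\emph{Monoidality.} For $s>0$, evaluation $\mathrm{ev}_s\colon\crees{\K_I}\to\K$, $a(t)\mapsto a(s)$, is a morphism of commutative $\K$-algebras, so $M\mapsto M\otimes_{\crees{\K_I}}\K$ (base change along $\mathrm{ev}_s$) is strong symmetric monoidal, exactly as in Proposition~\ref{prop.monoidality-eval}. The same holds for $M\mapsto M/tM=M\otimes_{\crees{\K_I}}\crees{\K_I}/(t)$. I would check that $\crees{\K_I}/(t)\cong\K$: a function $a(t)=\sum_{n\ge0}a_nt^n$ is divisible by $t$ in $\crees{\K_I}$ exactly when $a_0=0$, since then $a(t)/t$ is smooth on $\R_{>0}$ with expansion $\sum_{n\ge0}a_{n+1}t^n$. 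The filtration on the fibre at $s$ is the image filtration induced from $F^a$; the precise indexing is not essential for monoidality.

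\emph{Fibre at $s>0$.} Given $x\in F^aA$, the constant function $t\mapsto x$ lies in $\crees{A}$ by exhaustiveness, with all coefficients in degree $0$; if needed I would instead use $t^{a}\cdot t^{-a}x$. Hence $\mathrm{ev}_s\colon\crees{A}\to A$ is surjective. For the kernel I would show that if $a(s)=0$ then $a=(t-s)b$ with $b\in\crees{A}$. Away from $t=s$, set $b=a/(t-s)$; near $t=s$, use the Hadamard lemma $b(t)=\int_0^1a'(s+u(t-s))\,du$. The Laurent expansion of $b$ near $0$ is $a(t)\cdot(t-s)^{-1}$ with $(t-s)^{-1}=-s^{-1}\sum_k(t/s)^k$. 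Its coefficients are $b_n=-\sum_{m\le n}a_ms^{m-n-1}$, and these lie in $F^nA$ because the filtration is increasing. If $\varepsilon>s$ one first shrinks $\varepsilon$, which is harmless since only germs near $0$ matter. This gives $\crees{A}\otimes_{\crees{\K_I}}\K\cong A$.

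\emph{Fibre at $0$.} I would define
\[
\Phi\colon\crees{A}\to\grc{A},\qquad \sum_n a_nt^n\mapsto\bigl([a_n]\bigr)_n,\quad [a_n]\in\gr^n_FA.
\]
Only finitely many negative indices occur, so $\Phi$ lands in $\grc{A}$, and it is a chain map. $\Phi$ kills $t\crees{A}$: if $a=tb$, then $a_n=b_{n-1}\in F^{n-1}A$, so $[a_n]=0$. Conversely, if every $a_n\in F^{n-1}A$, then $a/t$ is smooth with coefficients $a_{n+1}\in F^{n}A$, so $a\in t\crees{A}$. Hence $\ker\Phi=t\crees{A}$.

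\emph{Main obstacle: surjectivity of $\Phi$.} Given arbitrary classes $(g_n)_{n\ge N}$ with lifts $a_n\in F^nA$ and no convergence condition, I must produce a smooth germ with exactly these Laurent coefficients. I would try a Borel-type construction, $a(t)=\sum_na_nt^n\chi(t/\lambda_n)$ with a cutoff $\chi$ and $\lambda_n\to0$ fast, as in Borel's lemma for a Fréchet $V$. However, an asymptotic expansion is not a convergent identity $a(t)=\sum a_nt^n$ on $(0,\varepsilon)$, which is what Definition~\ref{def:smooth-rees} demands. So surjectivity can fail for a literal convergent expansion. I would therefore either read the expansion in Definition~\ref{def:smooth-rees} as asymptotic, or exploit the freedom of choosing lifts modulo $F^{n-1}A$ to arrange convergence. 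If neither works, the honest statement is that $\crees{A}|_0$ embeds in $\grc{A}$ with image the convergent sequences. The ring structure follows because $\Phi$ is multiplicative: Cauchy products of coefficients match products in $\gr$.
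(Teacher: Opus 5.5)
Your route is the paper's. Monoidality comes from base change along $\mathrm{ev}_s\colon\crees{\K_I}\to\K$ and along $\crees{\K_I}\to\crees{\K_I}/(t)\cong\K$. The fibre at $0$ is identified by reading off the classes $[a_n]\in\gr^nA$ of the Laurent coefficients.

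You are more careful than the paper, which never checks either of the following:
- that $\crees{A}\otimes_{\crees{\K_I}}\K\to A$ is injective (your division by $t-s$);
- that $\ker\Phi=t\crees{A}$.

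Both of your arguments are fine. One slip: the constant function $t\mapsto x$ lies in $\crees{A}$ only if $x\in F^0A$. For $x\in F^aA$ use $t\mapsto s^{-a}t^ax$, whose single coefficient sits in degree $a$ and which takes the value $x$ at $t=s$. Like the paper, you also never say which filtration $M\vert_s$ carries for a general $\crees{\K_I}$-module $M$. So only the identification of underlying complexes is really established.

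Your doubt about surjectivity of $\Phi$ is justified, and it exposes a gap in the paper's proof rather than in yours. The paper supports the factor $\prod_{n\ge0}\gr^nA$ only by ``the positive tail may be infinite''. That shows the image is not confined to the direct sum, not that every sequence of classes is attained. With Definition~\ref{def:smooth-rees} as written, it is not.

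\emph{Counterexample.} Take $A=\R[x]$ in degree $0$ with $d=0$, $F^nA$ the polynomials of degree $\le n$, and the norm $\lVert\sum_kc_kx^k\rVert=\sum_k\lvert c_k\rvert$, so that $\grc{A}\cong\R[[x]]$. A preimage of $(n!\,[x^n])_{n\ge0}$ would need coefficients $a_n=n!\,x^n+(\text{lower degree})$. Hence $\lVert a_nt^n\rVert\ge n!\,t^n\to\infty$, and $\sum_na_nt^n$ converges for no $t>0$.

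The same estimate kills your idea of re-choosing lifts. When the $F^{n-1}A$ are closed in a normed $A$, every lift has norm at least the quotient norm of its class. In this example one can say more. Pick a finite-dimensional $F^mA$ containing $a(t)$ for uncountably many $t$, apply continuous functionals vanishing on it, and use the identity theorem. This shows the image is just $\R[x]$ for every separated locally convex topology. So $\crees{A}/t\crees{A}$ and $\grc{A}$ are not even isomorphic as vector spaces.

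\emph{What survives.} Under the literal definition the correct statement is your fallback: an injective multiplicative chain map $\crees{A}/t\crees{A}\hookrightarrow\grc{A}$. This is all that Remark~\ref{rem:gr-action} and Lemma~\ref{lem:full-semigroup} use, so the index theorem survives.

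Your asymptotic rescue also works. Read the expansions asymptotically and assume $A$ is metrizable with seminorms $p_1\le p_2\le\cdots$. Take $\chi=1$ on $[0,1/2]$ and $\chi=0$ on $[1,\infty)$, and choose $\lambda_n\downarrow0$ with $\lambda_n\le1$ and $p_n(a_n)\lambda_n\le2^{-n}$. Then $\sum_na_nt^n\chi(t/\lambda_n)$ is locally finite on $\R_{>0}$ and has $O(t^M)$ remainders for every $M$ and every $p_k$. Your kernel computations go through verbatim under this reading.
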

\begin{proof}
For $s>0$ the functor is induced by the algebra morphism $\crees{\K_I}\to\K$, $f\mapsto f(s)$, so monoidality is immediate; on $\crees A$ it returns $a(t)\mapsto a(s)\in A$. For $s=0$, the quotient by $(t)$ remembers exactly the classes of the Laurent coefficients modulo the next lower filtration step. This is precisely the Laurent series associated graded object: negative degrees form a direct sum because only finitely many negative coefficients occur, whereas nonnegative degrees form a product because the positive tail may be infinite; that is, $\crees{A}/t\crees{A}\cong\grc{A}$.
\end{proof}
\begin{remark}
In what follows, we will need to consider $\K$ as an FDGA with two different filtrations: either as the tensor unit $\K_I$, for which $\rees{\K_I}=\K[t]$ and $\crees{\K_I}=\crki$, or with the constant filtration $\K_c\coloneqq(\K,\mathrm{const})$, for which $\rees{\K_c}=\K[t,t^{-1}]$ and $\crees{\K_c}=C^\omega(\K)$.
\end{remark}
\subsection{Traces and their Rees extensions}

\begin{definition}\label{def.fdga-with-trace}
A \emph{filtered differential graded algebra with trace} is a quadruple
\[
\awt{A}
\]
consisting of:
\begin{itemize}
    \item an FDGA $A$;
    \item a two-sided differential ideal $A_{\tr}\subseteq A$;
    \item a filtered complex $P$;
    \item a filtered, continuous cochain map $\tr_A\colon A_{\tr}\to P$ vanishing on graded commutators in $\commu{A,A_{\tr}}$.
\end{itemize}
A morphism
\[
\lr*{A,A_{\tr},P,\tr_A}\longrightarrow \lr*{B,B_{\tr},Q,\tr_B}
\]
consists of an algebra morphism $\phi\colon A\to B$ carrying $A_{\tr}$ to $B_{\tr}$ together with a filtered cochain map $\bar\phi\colon P\to Q$ such that
\[
\bar\phi(\tr_A(a))=\tr_B(\phi(a))
\qquad \text{for every } a\in A_{\tr}.
\]
\end{definition}

In this work we will focus on traces with values in $\K$ endowed with either the constant filtration $\K_c$ or the positive filtration $\K_I$. When both $P=\K_I$ and $Q=\K_c$ occur, the map $\bar\phi$ on trace targets is always taken to be the canonical map $\K_I\to\K_c$, i.e.\ the identity of $\K$ regarded as a morphism of filtered complexes; we make this standing assumption throughout Section~\ref{sec:index}.

\begin{definition}\label{def.traces-on-rees}
Let $\awt{A}$ be a filtered differential graded algebra with trace. We define the two following ideals of $\crees{A}$:
\begin{enumerate}[label=\textup{(\roman*)}]
    \item The  \emph{pointwise smooth Rees ideal}, denoted $\creestr{A}\subseteq \crees{A}$, consists of those $a(t)$ such that $a(t)\in A_{\tr}$ for every $t>0$.
    \item The \emph{coefficient-wise smooth Rees ideal} is $\crees{A_{\tr}}\subseteq \creestr{A}$.
\end{enumerate}
The trace is then defined pointwise on $\creestr{A}$ (and so on $\crees{A_{\tr}}$) as
\[
     \tr_{\crees{A}}(a)(t)\coloneqq \tr_A(a(t)),
     \]
     and takes values in $C^\infty(\R_{>0},P)$.
\end{definition}


\begin{proposition}\label{prop.they-coincide}
Let $\awt{A}$ be a filtered differential graded algebra with trace. Then
\begin{enumerate}
    \item The pointwise smooth Rees ideal $\creestr{A}$ and the coefficient-wise smooth Rees ideal $\crees{A_{\tr}}$ are, in fact, ideals;
    \item $\tr_{\crees{A}}$ vanishes on commutators and is then a trace;
    \item if $a(t)=\sum_{n\geq N}a_nt^n$ lies in $\crees{A_{\tr}}$, then $\tr_{\crees{A}}(a)$
    lies in $\crees{P}$, its Laurent expansion in a right neighbourhood of $0$ being
    \[
    \tr_A(a(t))=\sum_{n\geq N}\tr_A(a_n)\,t^n
    \qquad (0<t<\eps).
    \]
    Consequently $\tr_{\crees{A}}$ restricts to a trace on the coefficient-wise ideal,
    \[
    \strawt{A}.
    \] 
\end{enumerate}

\end{proposition}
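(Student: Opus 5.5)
The three items are verified in turn, using only the definitions of $\crees{A}$, of the two ideals, and the properties of $\tr_A$ (filtered, continuous, vanishing on $\commu{A,A_{\tr}}$).

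\emph{Item (1).} For the pointwise ideal, take $a(t)\in\creestr{A}$ and $b(t)\in\crees{A}$. By Proposition~\ref{prop:rees-of-fdga} the products $a(t)b(t)$ and $b(t)a(t)$ lie in $\crees{A}$. For each fixed $t>0$ they lie in $A_{\tr}$, because $A_{\tr}$ is a two-sided ideal. Closure under the differential also holds pointwise, since $d$ acts pointwise and $A_{\tr}$ is a differential ideal. For the coefficient-wise ideal, write $a=\sum a_it^i$ with $a_i\in F^iA\cap A_{\tr}$. The Cauchy product formula in the proof of Proposition~\ref{prop.smooth-monoidal} gives coefficients $\sum_{i+j=n}a_ib_j$. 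Each such coefficient is a finite sum of products with a factor in $A_{\tr}$, so it lies in $A_{\tr}$, and it lies in $F^nA$. The inclusion $\crees{A_{\tr}}\subseteq\creestr{A}$ requires that a convergent Laurent series with coefficients in $A_{\tr}$ takes values in $A_{\tr}$. I would handle this by assuming $A_{\tr}$ is closed in $A$, or equivalently by reading $\crees{A_{\tr}}$ as $C^\omega$ of the filtered locally convex space $A_{\tr}$ with its own topology. For $t\geq\eps$ one uses the definition of $C^\omega(A_{\tr})$ as smooth $A_{\tr}$-valued maps.

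\emph{Item (2).} Fix $t$. Then $\tr(\commu{a,b})(t)=\tr_A(\commu{a(t),b(t)})=0$, because commutators in $\crees{A}$ are computed pointwise and the parity signs are $t$-independent. Smoothness of $t\mapsto\tr_A(a(t))$ follows from continuity and linearity of $\tr_A$: a continuous linear map commutes with difference quotients and their limits. The cochain property is likewise pointwise.

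\emph{Item (3).} I expect this to be the main point. Using $\tr_A$ continuous and linear, I would apply it termwise to the convergent series $a(t)=\sum_{n\ge N}a_nt^n$ on $0<t<\eps$, which yields $\sum\tr_A(a_n)t^n$. Since $\tr_A$ is filtered and $a_n\in F^nA$, each coefficient $\tr_A(a_n)$ lies in $F^nP$, so the result is in $\crees{P}$. The delicate point is interchanging $\tr_A$ with the infinite sum. Continuity of $\tr_A$ gives $\tr_A(\lim S_k)=\lim\tr_A(S_k)$ for the partial sums $S_k$, and this suffices, given that the series expansion in Definition~\ref{def:smooth-rees} is understood as convergence in the locally convex topology.

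The restriction to $\crees{A_{\tr}}\to\crees{P}$ is then a trace, by items (1) and (2).
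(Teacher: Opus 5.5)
Your proposal is correct and follows the paper's route. The paper likewise treats (1) and (2) as direct pointwise and coefficient-wise checks, and obtains (3) by applying the linear, continuous, filtered map $\tr_A$ termwise to the Laurent series. One small point: no closedness assumption on $A_{\tr}$ is needed for $\crees{A_{\tr}}\subseteq\creestr{A}$, since by Definition~\ref{def:smooth-rees} elements of $\crees{A_{\tr}}$ are already smooth maps into $A_{\tr}$. So your second reading is simply the definition, not an equivalent hypothesis.
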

\begin{proof}
 Statements (1) and (2) are obvious. (3) is a consequence of $\tr_{A}$ being linear, continuous and a map of filtered complexes.
\end{proof}

\begin{lemma}\label{lem.graded-trace}
Let $\awt{A}$ be a filtered differential graded algebra with trace. Then its associated graded algebra $\gr{A}$ --- and likewise its Laurent series associated graded $\grc{A}$ and its degree-zero part $\gr^0{A}$ --- is again an algebra with trace, with ideal $\gr A_{\tr}$ (resp.\ $\grc{A_{\tr}}$, $\gr^0 A_{\tr}$), target $\gr P$ (resp.\ $\grc P$, $\gr^0 P$), and trace $\gr(\tr_A)$.
\end{lemma}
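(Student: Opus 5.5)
The plan is to verify the four conditions of Definition~\ref{def.fdga-with-trace} for $\gr A$, then deduce the statements for $\grc A$ and $\gr^0 A$ by the same argument or by restriction.

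\textbf{The graded algebra and its ideal.} Since $\gr$ is strong symmetric monoidal, $\gr A$ is a differential graded algebra, and $\gr(\tr_A)\colon \gr A_{\tr}\to\gr P$ is a cochain map of graded complexes, because $\tr_A$ is filtered. Give $A_{\tr}$ the induced filtration $F^aA_{\tr}=F^aA\cap A_{\tr}$. Then the inclusion induces an injection $\gr^a A_{\tr}\to \gr^a A$, since $F^{a-1}A\cap F^aA_{\tr}=F^{a-1}A_{\tr}$. So $\gr A_{\tr}$ is a graded subcomplex of $\gr A$. It is a two-sided ideal: for $[x]\in\gr^aA$ and $[y]\in\gr^bA_{\tr}$, the product $[x][y]=[xy]$ has $xy\in F^{a+b}A\cap A_{\tr}$, and similarly on the other side. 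It is stable under $d$ because $A_{\tr}$ is a differential ideal.

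\textbf{Vanishing on commutators.} This is the key computation. For homogeneous $[x]\in\gr^aA$ and $[y]\in\gr^bA_{\tr}$, the graded commutator $\commu{[x],[y]}$ is the class of $\commu{x,y}$ in $\gr^{a+b}A_{\tr}$. Hence
\[
\gr(\tr_A)\commu{[x],[y]}=[\tr_A\commu{x,y}]=0 \in \gr^{a+b}P.
\]
Commutators of inhomogeneous elements then vanish by bilinearity.

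\textbf{The Laurent series version $\grc A$.} Here elements are sums with finitely many negative-degree components and possibly infinitely many nonnegative ones. Define $\grc{\tr}$ componentwise, sending $(x_n)$ to $(\gr^n\tr_A(x_n))$, which lands in $\grc P$. The product of two such sequences has finite sums in each degree, exactly as in Proposition~\ref{prop.smooth-monoidal}. The commutator of $(x_n)$ and $(y_m)$ has degree-$k$ component $\sum_{n+m=k}\commu{x_n,y_m}$, a finite sum. So vanishing follows degreewise from the homogeneous case above. Alternatively, one can identify $\grc A=\crees A/t\crees A$ (Proposition~\ref{prop.monoidality-eval-smooth}) and use the trace on $\crees{A_{\tr}}$ from Proposition~\ref{prop.they-coincide}(3).

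\textbf{The degree-zero part $\gr^0 A$.} This is a subalgebra, since $F^0\cdot F^0\subseteq F^0$. The ideal $\gr^0A_{\tr}$ and the trace to $\gr^0P$ are obtained by restricting the previous step to degree zero.

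\textbf{Main obstacle.} The delicate points are bookkeeping rather than substance:
\begin{itemize}
\item the strictness of the inclusion $A_{\tr}\subseteq A$ with the induced filtration, needed so that $\gr A_{\tr}$ really is an ideal of $\gr A$;
\item in the $\grc{}$ case, checking that the degreewise definition is well defined and compatible with the differential;
\item continuity, which holds trivially if $\gr$ is taken with the quotient topologies.
\end{itemize}
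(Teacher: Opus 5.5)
Your proposal is correct and follows essentially the paper's proof: the paper observes that the filtered ideal $A_{\tr}$ and the filtered map $\tr_A$ pass to $\gr A$ (resp.\ $\grc{A}$, $\gr^0 A$), and that, $\gr$ being strong monoidal, graded commutators go to graded commutators, so $\gr(\tr_A)$ still vanishes on them. Your element-level identity $\commu{[x],[y]}=[\commu{x,y}]$ makes that argument explicit, and so do your two further checks: that the induced filtration makes $\gr A_{\tr}\to\gr A$ injective, and that each degree of a product in $\grc{A}$ is a finite sum.
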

\begin{proof}
Because $A_{\tr}\subseteq A$ is a filtered two-sided ideal and $\tr_A$ is a filtered morphism, both pass to the associated graded (respectively Laurent series associated graded, respectively degree-$0$) component, and $\gr A_{\tr}$ is an ideal in $\gr A$. Since $\gr$ is strong monoidal it sends products to products and hence graded commutators to graded commutators, so $\gr(\tr_A)$ still vanishes on them.
\end{proof}

We shall also need to know how a trace behaves in cohomology. There are two ways
of passing to cohomology here, and both are used below. Throughout, the cohomology of
a filtered complex carries the filtration induced by the images of the $H(F^\bullet-)$,
and a quotient of a filtered complex carries the quotient filtration; with these
conventions every map occurring below is filtered of order $0$.
 
\begin{proposition}\label{prop:trace-on-cohomology}
Let $\awt{A}$ be an FDGA with trace. Then:
\begin{enumerate}[label=\textup{(\roman*)}]
\item $H(A)$ is a filtered graded algebra, $H(A_{\tr})$ is a filtered graded
$H(A)$-bimodule, and the map $H(A_{\tr})\to H(A)$ induced by the inclusion is a
morphism of $H(A)$-bimodules whose image is a two-sided ideal of $H(A)$;
\item the assignment
\[
\tr_{H(A)}\colon H(A_{\tr})\longrightarrow H(P),
\qquad
\tr_{H(A)}\lrs*{a}\coloneqq\lrs*{\tr_A(a)}
\]
is well defined and vanishes on graded commutators, i.e.\
$\tr_{H(A)}\lr*{\commu{\lrs*{x},\lrs*{a}}}=0$ for $\lrs*{x}\in H(A)$ and
$\lrs*{a}\in H(A_{\tr})$.
\end{enumerate}
We call $\tr_{H(A)}$ the \emph{trace induced on cohomology}. If the differential of
$P$ vanishes then $H(P)=P$ and $\tr_{H(A)}\lrs*{a}=\tr_A(a)$ for every cocycle
$a\in A_{\tr}$.
\end{proposition}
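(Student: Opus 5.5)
The plan is to work directly with representatives and use only three facts: $d$ is a derivation, $A_{\tr}$ is a two-sided differential ideal, and $\tr_A$ is a filtered cochain map that kills graded commutators.

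\emph{Part (i).} Since $A$ is a DGA, the Leibniz rule shows that cocycles $Z(A)$ form a subalgebra and coboundaries $B(A)$ form a two-sided ideal in it: $x\,dy=\pm d(xy)$ when $dx=0$. Hence $H(A)$ is a graded algebra. Because $A_{\tr}$ is a two-sided differential ideal, the same computation with $x\in Z(A)$ and $a\in A_{\tr}$ gives
\[
x\cdot Z(A_{\tr})\subseteq Z(A_{\tr}),\qquad x\cdot B(A_{\tr})\subseteq B(A_{\tr}),
\]
since $x\,d a'=\pm d(xa')$ with $xa'\in A_{\tr}$. The right action is handled the same way. So $H(A_{\tr})$ is an $H(A)$-bimodule, and the map induced by inclusion is a bimodule map, because it is induced by the bimodule map $A_{\tr}\hookrightarrow A$. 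Its image is therefore a sub-bimodule of $H(A)$, i.e.\ a two-sided ideal.

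For the filtrations, I use the convention $F^aH(A)=\mathrm{im}\,H(F^aA)$. The product $F^aA\otimes F^bA\to F^{a+b}A$ is a chain map, so it descends to
\[
H(F^aA)\otimes H(F^bA)\to H(F^{a+b}A).
\]
This makes every map filtered of order $0$, and the bimodule case is identical.

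\emph{Part (ii).} Since $\tr_A$ is a cochain map, it sends cocycles to cocycles and $\tr_A(da)=d\tr_A(a)$ to coboundaries. So $[a]\mapsto[\tr_A(a)]$ is well defined. It is filtered because $\tr_A$ is filtered and commutes with $H(F^a-)$. For the commutator condition, take cocycle representatives $x\in Z(A)$ and $a\in Z(A_{\tr})$. By part (i), the class $\commu{[x],[a]}$ in $H(A_{\tr})$ is represented by $\commu{x,a}$, and $\tr_A(\commu{x,a})=0$ by hypothesis.

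The only point needing care, which I expect to be the mild obstacle, is what "graded commutator" means when one entry lies in $H(A)$ and the other in the bimodule $H(A_{\tr})$. I would read it as $[x][a]-(-1)^{|x||a|}[a][x]$, computed with the bimodule actions. With that reading it is represented by the ordinary commutator of representatives, which lies in $\commu{A,A_{\tr}}$.

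\emph{Final claim.} If $d_P=0$, then every element of $P$ is a cocycle and there are no nonzero coboundaries, so $H(P)=P$. It follows that $\tr_{H(A)}[a]=\tr_A(a)$ for every cocycle $a\in A_{\tr}$.
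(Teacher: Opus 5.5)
Your proposal is correct and is precisely the routine verification that the paper's one-line proof alludes to: the Leibniz rule plus the differential-ideal property of $A_{\tr}$ for (i), and, for (ii), the fact that $\tr_A$ is a filtered cochain map vanishing on $\commu{A,A_{\tr}}$ already at cochain level. The one check you leave implicit is that the bimodule action is well defined in the $H(A)$-variable, i.e.\ $B(A)\cdot Z(A_{\tr})\subseteq B(A_{\tr})$ (and symmetrically on the right); this follows from $(dy)\,a=d(ya)$ when $da=0$, with $ya\in A_{\tr}$ because $A_{\tr}$ is a two-sided ideal.
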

\begin{proof}
Both statements are routine verifications. For \textup{(i)} one uses that $A_{\tr}$ is
a filtered differential ideal; for \textup{(ii)}, that $\tr_A$ is a filtered cochain map and that it vanishes on $\commu{A,A_{\tr}}$ already at the level of cochains.
\end{proof}
 
Note that $H(A_{\tr})\to H(A)$ need not be injective, which is why the trace above is
defined on $H(A_{\tr})$ itself and only its \emph{image} is asserted to be an ideal.
 
Dividing by commutators before passing to cohomology costs a trace nothing, since it
vanishes on them in any case, and it lets us take traces in cohomology of a larger
supply of elements: only closedness modulo commutators is then required.
 
\begin{definition}\label{def:commutator-quotient}
Let $\awt{A}$ be an FDGA with trace. Write
$\commu{A,A_{\tr}}\subseteq A_{\tr}$ for the span of the graded commutators
$\commu{x,a}$ with $x\in A$ and $a\in A_{\tr}$. The \emph{commutator quotient} of
$A_{\tr}$ is
\[
\overline{A_{\tr}}\coloneqq A_{\tr}\big/\commu{A,A_{\tr}} ,
\]
and we write $\bar a$ for the class of $a\in A_{\tr}$.
\end{definition}
 
\begin{proposition}\label{prop:universal-trace}
With the notation of Definition~\ref{def:commutator-quotient}:
\begin{enumerate}[label=\textup{(\roman*)}]
\item $\commu{A,A_{\tr}}$ is a filtered subcomplex of $A_{\tr}$, so $d_A$ descends to a
differential on $\overline{A_{\tr}}$;
\item $\tr_A$ factors uniquely through the projection $A_{\tr}\to\overline{A_{\tr}}$,
as a cochain map $\overline{\tr}_A\colon\overline{A_{\tr}}\to P$;
\item consequently $\tr_A$ induces a map on cohomology
\[
\tr_{H}\coloneqq H\lr*{\overline{\tr}_A}\colon
H\lr*{\overline{A_{\tr}}}\longrightarrow H(P),
\qquad
\tr_{H}\lrs*{\bar a}=\lrs*{\tr_A(a)} .
\]
\end{enumerate}
\end{proposition}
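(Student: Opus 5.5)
The plan is to verify the three items in order, since each feeds the next.

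For (i), the key point is that the differential preserves commutators. Because $d_A$ is a degree $+1$ derivation, the graded Leibniz rule gives, for homogeneous $x\in A$ and $a\in A_{\tr}$,
\[
d_A\commu{x,a}=\commu{d_Ax,a}+(-1)^{|x|}\commu{x,d_Aa}.
\]
Both terms on the right lie in $\commu{A,A_{\tr}}$: the first because $d_Ax\in A$, the second because $A_{\tr}$ is a differential ideal, so $d_Aa\in A_{\tr}$. Hence $\commu{A,A_{\tr}}$ is a subcomplex of $A_{\tr}$, and since $A_{\tr}$ is a two-sided ideal it is contained in $A_{\tr}$. For the filtration, the subcomplex is given the induced filtration $F^p\cap\commu{A,A_{\tr}}$, and the quotient carries the quotient filtration, following the convention stated just before Proposition~\ref{prop:trace-on-cohomology}. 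With this choice, $d_A$ descends to a differential on $\overline{A_{\tr}}$ that preserves the filtration, and the projection is a filtered cochain map.

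For (ii), the argument is the universal property of the cokernel. By Definition~\ref{def.fdga-with-trace}, $\tr_A$ vanishes on every generator $\commu{x,a}$, so by linearity it vanishes on their span. It therefore factors uniquely as a linear map $\overline{\tr}_A$, and uniqueness is clear because the projection is surjective. The factored map commutes with differentials: $\overline{\tr}_A(d\bar a)=\tr_A(d_Aa)=d_P\tr_A(a)$. It is also filtered. Indeed, an element of $F^p\overline{A_{\tr}}$ is the class of some $a\in F^pA_{\tr}$, and $\tr_A(a)\in F^pP$; well-definedness does not depend on which representative is chosen.

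For (iii), one applies the cohomology functor to the cochain map $\overline{\tr}_A$. This gives $\tr_H[\bar a]=[\tr_A(a)]$ for every $a\in A_{\tr}$ with $d_Aa\in\commu{A,A_{\tr}}$. I also plan to remark that $\tr_H$ composed with the map $H(A_{\tr})\to H(\overline{A_{\tr}})$ recovers the map $\tr_{H(A)}$ of Proposition~\ref{prop:trace-on-cohomology}, though this compatibility is not required by the statement.

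Everything here is routine. The only spot that needs some care is the filtration bookkeeping in (i). I would not show that $d_A$ maps $F^p\commu{A,A_{\tr}}$ into the span of commutators of filtration $\le p$ by working with individual commutators. Instead I would simply use the induced (intersection) filtration, under which the claim follows immediately from the fact that $d_A$ preserves $F^pA$.
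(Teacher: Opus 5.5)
Your proposal is correct and takes the same route as the paper: the graded Leibniz rule for (i), factorization of $\tr_A$ through the quotient (using its vanishing on commutators) for (ii), and functoriality of cohomology for (iii). You put the induced filtration on the commutator span and the quotient filtration on $\overline{A_{\tr}}$; this matches the conventions the paper fixes just before Proposition~\ref{prop:trace-on-cohomology}, and it fills in the details the paper dismisses as routine.
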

\begin{proof}
Part \textup{(i)} follows from the graded Leibniz rule; \textup{(ii)} from the vanishing of $\tr_A$ on
$\commu{A,A_{\tr}}$; and
\textup{(iii)} is then immediate.
\end{proof}
 
\begin{remark}\label{rem:two-traces}
Proposition~\ref{prop:trace-on-cohomology} needs its argument to be a $d_A$-cocycle,
whereas Proposition~\ref{prop:universal-trace} needs it only to be one \emph{modulo
commutators}. It is the second that Appendix~\ref{sec.appendix} requires: the elements
$e^{\alpha(t)}$ occurring there have no class in $H(A_{\tr})$, but do have one in
$H\lr*{\overline{A_{\tr}}}$.
\end{remark}

\begin{remark}\label{rem:trace-target-degree0}
When the trace target is $P=\K_I$, one has $\gr^a\K_I=0$ for $a\neq 0$ and $\gr^0\K_I=\K$, so $\grc{\K_I}=\K$ concentrated in degree $0$. Consequently the induced trace on $\grc{A}$ is a degree-$0$ map landing in $\K$, and it is computed on the degree-$0$ component:
\[
\tr_{\grc{A}}(y)=\tr_{\gr^0 A}(y_0),\qquad y_0=\text{degree-}0\text{ part of }y.
\]
\end{remark}

\begin{lemma}\label{lem:limit-trace}
Let $\awt[\K_I]{A}$ be a filtered differential graded algebra with trace, and let
\[
a(t)=\sum_{n\geq N} a_nt^n\in \crees{A_{\tr}} .
\]
Then
\[
\lim_{t\to 0^+}\tr_A(a(t))=\tr_{\gr^0 A}([a_0]).
\]
\end{lemma}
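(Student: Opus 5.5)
By Proposition~\ref{prop.they-coincide}(3), since $a(t)\in\crees{A_{\tr}}$, the scalar function $\tr_A(a(t))$ belongs to $\crees{\K_I}$. Near $0$ it has the Laurent expansion
\[
\tr_A(a(t))=\sum_{n\geq N}\tr_A(a_n)\,t^n \qquad (0<t<\eps).
\]
The plan is to use the filtration on $P=\K_I$ to show that all negative-degree coefficients vanish. Then apply the limit statement of Proposition~\ref{prop.smooth-monoidal}, and finally identify the constant coefficient with the trace on $\gr^0A$.

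First, each coefficient satisfies $a_n\in F^nA\cap A_{\tr}$. Since $\tr_A$ is a filtered map, $\tr_A(a_n)\in F^n\K_I$. For $n<0$ we have $F^n\K_I=0$, so $\tr_A(a_n)=0$. The expansion therefore starts at $n=0$:
\[
\tr_A(a(t))=\sum_{n\geq 0}\tr_A(a_n)t^n.
\]
This is exactly an element of $\crees{\K_I}$ admitting a Taylor expansion at $0$. Proposition~\ref{prop.smooth-monoidal} then gives $\lim_{t\to0^+}\tr_A(a(t))=\tr_A(a_0)$.

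Second, we identify $\tr_A(a_0)$ with $\tr_{\gr^0A}([a_0])$. By Lemma~\ref{lem.graded-trace}, the trace on $\gr^0A$ is $\gr^0(\tr_A)\colon \gr^0A_{\tr}\to\gr^0\K_I=F^0\K_I/F^{-1}\K_I=\K/0=\K$. It sends the class $[a_0]$ of $a_0\in F^0A_{\tr}$ to the class of $\tr_A(a_0)$. Since $F^{-1}\K_I=0$, this class is $\tr_A(a_0)$ itself. This step is well defined: changing $a_0$ by an element of $F^{-1}A_{\tr}$ changes the trace by an element of $F^{-1}\K_I=0$. Here $[a_0]$ means the class in $\gr^0A_{\tr}$, which is the component of $\grc{A_{\tr}}$ picked out in Remark~\ref{rem:trace-target-degree0}.

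The only point requiring care is the first step. We must invoke continuity of $\tr_A$ (through Proposition~\ref{prop.they-coincide}(3)) so that the trace may be taken term by term in the series. Without the positivity of the filtration on $\K_I$, negative powers could make the limit diverge, so that hypothesis is essential and is exactly where $P=\K_I$ is used.
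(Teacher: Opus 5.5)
Your proof is correct and follows the paper's argument essentially step for step. You kill the negative-degree coefficients using $F^n\K_I=0$ for $n<0$, read off the limit as the constant coefficient of the resulting power series, and identify $\tr_A(a_0)$ with the degree-zero component of the graded trace from Lemma~\ref{lem.graded-trace}; your extra well-definedness check on $[a_0]$ is a harmless addition.
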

\begin{proof}
Since $a\in\crees{A_{\tr}}$ we have $a_n\in F^nA_{\tr}$ for all $n$. As $\tr_A\colon A_{\tr}\to \K_I$ is a morphism of filtered modules and $I^n\K=0$ for $n<0$, it follows that $\tr_A(a_n)=0$ for every $n<0$. By Proposition \ref{prop.they-coincide},
\[
\tr_A(a(t))=\sum_{n\geq 0}\tr_A(a_n)t^n
\qquad (0<t<\varepsilon),
\]
which is an ordinary convergent power series near $t=0$, hence continuous there, with value $\tr_A(a_0)$ at $t=0$. On the other hand, the degree-zero component of the induced trace on $\gr A$ is the map
\[
F^0A/F^{-1}A\longrightarrow \K,
\qquad
[a_0]\longmapsto \tr_A(a_0)
\]
(Lemma~\ref{lem.graded-trace}), so the two expressions agree.
\end{proof}

\section{An Abstract Index Theorem}\label{sec:index}

In this section we state and prove our main theorem. We give some variations of it, from the least to the most complex.

Let $\awt[\K_I]{A}$ and $\awt[\K_c]{B}$ be FDGAs with traces and let $\phi\colon A\to B$ be a morphism of algebras with traces between them, with $\bar\phi\colon\K_I\to\K_c$ the canonical map from Definition~\ref{def.fdga-with-trace}.

\begin{definition}\label{def:constant-trace}
    We say that an element $a(t)\in\creestr{A}$ (resp., in $\crees{A_{\tr}}$) has \emph{constant trace} if $\tr_A a(t)\in\K\subseteq C^\infty(\R_{>0},\K)$, i.e.\ if $\tr_A(a(t))$ is independent of $t$.
\end{definition}

\begin{remark}\label{rem:constant-trace-limit}
    If $a\in\crees{A_{\tr}}$ has constant trace, then combining Definition~\ref{def:constant-trace} with Lemma~\ref{lem:limit-trace} gives
    \[
    \tr_A(a(1))=\tr_{\gr^0{A}}([a_0]).
    \]
\end{remark}

\begin{remark}\label{rem:preimage-constant-trace}
Let $b\in\creestr{B}$ be an element of constant trace, and let $a\in \creestr{A}$ be such that $\phi(a)=b$. Then $a$ has constant trace. Indeed, using that $\phi$ is a morphism of algebras with traces and $\bar\phi=\mathrm{id}_\K$,
\[
\tr_{\crees{A}}(a)=\bar\phi\!\left(\tr_{\crees{A}}(a)\right)=\tr_{\crees{B}}(\phi(a))=\tr_{\crees{B}}(b)\in \K.
\]
\end{remark}

\begin{lemma}\label{lem:easy-case2}
    Let $H\in\crees{A_{\tr}}$, let $h=[H_0]\in\gr^0{A}$, and assume $\phi(H)$ has constant trace. Then
    \[
    \tr_{B}\!\left(\phi(H)\vert_1\right)=\tr_{\gr^0 A}(h).
    \]
\end{lemma}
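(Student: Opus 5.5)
The plan is to reduce the statement to Remark~\ref{rem:preimage-constant-trace} followed by Remark~\ref{rem:constant-trace-limit}.

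First, $\phi$ is a filtered algebra map with $\phi(A_{\tr})\subseteq B_{\tr}$. Applying it coefficient-wise to $H(t)=\sum_{n\geq N}H_nt^n$ gives $\phi(H)\in\crees{B_{\tr}}\subseteq\creestr{B}$. Here I use that $\phi$ is continuous, so it commutes with the convergent Laurent expansion on $(0,\eps)$; if continuity is not part of the data, I would simply read $\phi(H)$ as the pointwise image $t\mapsto\phi(H(t))$. This is the only point needing care, and it costs nothing, because the trace identity is applied pointwise.

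Next, since $\phi$ is a morphism of algebras with traces and $\bar\phi$ is the identity of $\K$, Remark~\ref{rem:preimage-constant-trace} gives, for every $t>0$,
\[
\tr_A(H(t))=\tr_B(\phi(H(t))).
\]
By hypothesis the right-hand side is a constant $c\in\K$. Hence $H$ has constant trace in $\crees{A_{\tr}}$.

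Then, by Remark~\ref{rem:constant-trace-limit}, which is itself Lemma~\ref{lem:limit-trace} applied to $H\in\crees{A_{\tr}}$ with trace target $\K_I$, we get
\[
\tr_A(H(1))=c=\lim_{t\to0^+}\tr_A(H(t))=\tr_{\gr^0A}([H_0])=\tr_{\gr^0A}(h).
\]
Finally, evaluating at $t=1$, $\tr_B(\phi(H)\vert_1)=\tr_B(\phi(H(1)))=\tr_A(H(1))$, which yields the claim.

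No step is a genuine obstacle. The only subtlety is the bookkeeping above: the trace identity is used pointwise in $t$, and the filtration $\K_I$ on the target of $\tr_A$ is what makes the limit in Lemma~\ref{lem:limit-trace} exist.
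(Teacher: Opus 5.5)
Your proof is correct and follows essentially the paper's argument: the paper also uses the trace-compatibility of $\phi$ (with $\bar\phi=\mathrm{id}_\K$) and Remark~\ref{rem:preimage-constant-trace} to pass constant trace from $\phi(H)$ to $H$, then replaces the value at $t=1$ by the limit $t\to0^+$ and concludes with Lemma~\ref{lem:limit-trace}, writing all of this as a single chain of equalities. Your extra remark on reading $\phi(H)$ pointwise (or using continuity of $\phi$) is harmless bookkeeping that the paper leaves implicit.
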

\begin{proof}
    We compute
    \[
    \tr_{B}\!\left(\phi(H)\vert_1\right)
    =\tr_{\crees{B}}(\phi(H))(1)
    =\tr_{\crees{A}}(H)(1)
    =\lim_{t\to0^+}\tr_{\crees{A}}(H)(t)
    =\tr_{\gr^0 A}(h).
    \]
    The first equality is the functoriality of evaluation (the trace commutes with $\vert_1$, Proposition~\ref{prop.monoidality-eval-smooth}). The second is the trace-compatibility of $\phi$ (with $\bar\phi=\mathrm{id}_\K$). The third holds because $\phi(H)$ has constant trace, hence so does $H$ by Remark~\ref{rem:preimage-constant-trace}, so $\tr_{\crees A}(H)$ is independent of $t$ and its value at $1$ equals its limit at $0$. The last equality is Lemma~\ref{lem:limit-trace} together with $h=[H_0]$.
\end{proof}

\begin{lemma}\label{lem:easy-case}
    Let $H\in\crees{A_{\tr}}$ and assume that
        \begin{itemize}
    \item $\phi(H)$ has constant trace;
        \item $\phi(H)=e^F$ for some $F\in\crees{B}$.
    \end{itemize}
    Set $h=[H_0]\in \gr^0{A}$ and $f=F(1)\in B$. Then
    \[
    \tr_B(e^f)=\tr_{\gr^0 A}(h).
    \]
\end{lemma}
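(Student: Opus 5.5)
The statement is a direct specialization of Lemma~\ref{lem:easy-case2}, so the plan is to reduce to it and then identify the left-hand side. All hypotheses of Lemma~\ref{lem:easy-case2} are assumed: $H\in\crees{A_{\tr}}$, $h=[H_0]$, and $\phi(H)$ has constant trace. Invoking that lemma gives
\[
\tr_B\!\left(\phi(H)\vert_1\right)=\tr_{\gr^0 A}(h).
\]
It therefore remains to show that $\phi(H)\vert_1=e^{f}$ with $f=F(1)$.

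For this identity I will use that evaluation at $t=1$ is induced by the algebra morphism $\crees{\K_I}\to\K$, $g\mapsto g(1)$. By Proposition~\ref{prop.monoidality-eval-smooth}, on $\crees{B}$ it is the algebra map $b(t)\mapsto b(1)$, computed pointwise. Since $\phi(H)=e^F$ as elements of $\crees{B}$, evaluating at $1$ gives $\phi(H)\vert_1=(e^F)(1)$. The exponential in $\crees B$ is understood pointwise in $t$, as an element of $C^\infty(\R_{>0},B)$, and we have $e^F\in\crees B$ by hypothesis. Hence $(e^F)(1)=e^{F(1)}=e^f$.

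If instead $e^F$ is meant as a convergent series $\sum_k F^k/k!$ in the locally convex topology, I will argue in three steps. Evaluation at $1$ is continuous and multiplicative, so it sends $F^k$ to $F(1)^k$. Passing to the limit of partial sums, it sends $e^F$ to $e^{F(1)}$, which is the same conclusion.

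Substituting this into the displayed equality gives $\tr_B(e^f)=\tr_{\gr^0A}(h)$. The only delicate point is the commutation of the exponential with evaluation, which depends on how $e^F$ is defined. Under the pointwise reading used in the paper it is immediate. Everything else is a direct citation of Lemma~\ref{lem:easy-case2}.
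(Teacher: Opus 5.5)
Your proposal is correct and is essentially the paper's proof, which likewise specializes Lemma~\ref{lem:easy-case2} and observes $\phi(H)\vert_1=e^F\vert_1=e^{F(1)}=e^f$. Your justification that evaluation at $t=1$ commutes with the exponential goes beyond the paper, which simply asserts it. Note that the appendix defines $e^{uF}$ as the semigroup generated by $F$ (Definition~\ref{def:exponentiable}); under that reading the conclusion also holds, since the continuous algebra map $\vert_1$ carries this semigroup to a semigroup in $B$ with generator $F(1)$, and such a semigroup is unique.
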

\begin{proof}
This is the special case of Lemma~\ref{lem:easy-case2} in which $\phi(H)=e^F$: indeed $\phi(H)\vert_1=e^F\vert_1=e^{F(1)}=e^f$.
\end{proof}

Let now $A$, $B$, $G$ be three filtered dg algebras, with $B$ endowed with the constant filtration. We assume we are given an action of $G$ on the filtered dg-module underlying $A$,
\[
\rho\colon G\otimes A\to A,
\]
and two morphisms of filtered dg-algebras
\[
\phi\colon A\to B, \qquad i\colon G\to B.
\]
The action $\rho$ and the morphisms $i$ and $\phi$ are required to be compatible, the compatibility being expressed by the commutativity of the following diagram of filtered dg-modules:
\begin{equation}\label{eq:base-compat}
\begin{tikzcd}
    G\otimes A\arrow[r,"\rho"]\arrow[d,"i\otimes\phi"']& A \arrow[d,"\phi"]\\
    B\otimes B\arrow[r,"\cdot"] & B,
\end{tikzcd}
\end{equation}
where $\cdot$ is the product of $B$. In elements, $\phi(g\cdot a)=i(g)\cdot\phi(a)$.

A direct consequence of Propositions~\ref{prop.smooth-monoidal} and \ref{prop.monoidality-eval-smooth} is that the diagram still commutes after applying $\crees$ (or $\gr$) to everything; explicitly:
\begin{equation}\label{eq.rees-is-compatible}
\begin{tikzcd}
    \crees{G}\otimes \crees{A}\arrow[r,"\rho"]\arrow[d,"{i}\otimes{\phi}"']& \crees{A} \arrow[d,"\phi"]\\
    \crees{B}\otimes \crees{B}\arrow[r,"\cdot"] & \crees{B},
\end{tikzcd}
\end{equation}
where, to lighten the notation, we have written $\rho$ for $\crees\rho$, and similarly for the other maps. We use this convention throughout.

\begin{remark}\label{rem:gr-action}
    By Proposition~\ref{prop.monoidality-eval-smooth} the action $\rho\colon G\otimes A\to A$ induces an action $\rho_0\colon \grc{G}\otimes \grc{A}\to \grc{A}$, and the diagram of $\K$-modules
    \[
\begin{tikzcd}
    \crees{G}\otimes \crees{A}\arrow[r,"\rho"]\arrow[d,"\vert_0\otimes\vert_0"']& \crees{A} \arrow[d,"\vert_0"]\\
    \grc{G}\otimes \grc{A}\arrow[r,"\rho_0"] & \grc{A}
\end{tikzcd}
\]
commutes. In other words, writing $\cdot_0$ for the multiplication of $\grc{G}$ on $\grc{A}$, for any $g\in \crees{G}$ and $a\in \crees{A}$,
\[
(g\cdot a)\bigr\vert_0=g\bigr\vert_0\cdot_0 a\bigr\vert_0.
\]
\end{remark}

We assume that $A$ and $B$ have traces $\mathrm{tr}_A\colon A\to \K_I$ and $\mathrm{tr}_B\colon B\to \K_c$ as before, and that $\phi$ is a morphism of algebras with traces (with $\bar\phi=\mathrm{id}_\K$).

\begin{definition}\label{def:graded-central}
Let $g\in \crees{G}$. We say that the action of $g$ on $\crees{A}$ is \emph{graded-central} if the induced action on $\grc{A}$ is a morphism of $\grc{A}$-bimodules; equivalently, if it is given by left multiplication by a central element $z_g\in Z(\grc{A})$.
\end{definition}

We first record a criterion guaranteeing that the constant-trace hypothesis of
the theorem is met; its proof is deferred to Appendix~\ref{sec.appendix}, where the
material on exponentials needed for it is developed.

\begin{proposition}[Constant-trace criterion]\label{prop:constant-trace-criterion}
Let $\awt{A}$ be an FDGA with trace whose target $P$ has vanishing differential --- as
is the case for $P=\K_I$ and for $P=\K_c$, the only trace targets used in this
section. Let $\beta(t)\in\crees{A}$ be of odd degree and set
\[
\alpha(t)=d_A\beta(t)+\beta(t)^2 .
\]
Assume that $\alpha(t)$ is exponentiable and that $e^{s\alpha(t)}\in\creestr{A}$ for
every $s>0$. Then $e^{\alpha(t)}$ has constant trace in the sense of
Definition~\ref{def:constant-trace}.
\end{proposition}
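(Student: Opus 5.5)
The argument is the Chern–Weil transgression trick: show that $\frac{d}{dt}\tr_A(e^{\alpha(t)})$ is the trace of an exact element plus commutators, hence zero. Write $\beta=\beta(t)$, $\alpha=\alpha(t)$ and $\dot\beta=\partial_t\beta$, which is again odd. Set $\nabla x\coloneqq d_Ax+\commu{\beta,x}$. Two facts are routine from the Leibniz rule and the oddness of $\beta$. First, $\nabla$ is a graded derivation. Second, the Bianchi identity holds, $\nabla\alpha=d_A\alpha+\commu{\beta,\alpha}=0$: indeed $d_A\alpha=d_A(\beta^2)=(d_A\beta)\beta-\beta\, d_A\beta$, and this cancels against $\commu{\beta,\alpha}=\beta\alpha-\alpha\beta$. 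Also $\dot\alpha=d_A\dot\beta+\dot\beta\beta+\beta\dot\beta=\nabla\dot\beta$.

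Next I would differentiate the exponential with Duhamel's formula, which is the one-simplex case of the Dyson series in Appendix~\ref{sec.appendix}:
\[
\frac{d}{dt}e^{\alpha(t)}=\int_0^1 e^{s\alpha}\,\dot\alpha\, e^{(1-s)\alpha}\,ds .
\]
The hypothesis $e^{s\alpha}\in\creestr{A}$ for $s>0$ makes each integrand an element of $A_{\tr}$, since $A_{\tr}$ is an ideal. Using continuity and linearity of $\tr_A$, I move the trace inside the integral. Vanishing on commutators then gives cyclicity; $\alpha$ is even, so no signs appear, and
\[
\frac{d}{dt}\tr_A(e^{\alpha})=\int_0^1\tr_A\lr*{\dot\alpha\, e^{\alpha}}ds=\tr_A\lr*{(\nabla\dot\beta)e^{\alpha}}.
\]
The trace landing in the integral is well defined because $e^{s\alpha}e^{(1-s)\alpha}=e^{\alpha}$.

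Now $\nabla e^{\alpha}=0$. I would get this by applying Duhamel again with $\nabla$ in place of $d/dt$, or termwise from the derivation property when the series makes sense, using Bianchi. Then $(\nabla\dot\beta)e^\alpha=\nabla(\dot\beta e^\alpha)=d_A(\dot\beta e^\alpha)+\commu{\beta,\dot\beta e^\alpha}$. The commutator term is killed by the trace. Since $\tr_A$ is a cochain map into $P$ with $d_P=0$, we get $\tr_A(d_A(\dot\beta e^\alpha))=d_P\tr_A(\dot\beta e^\alpha)=0$. In the language of Proposition~\ref{prop:universal-trace}, $\overline{\dot\beta e^\alpha}$ is handled in $H(\overline{A_{\tr}})$, as anticipated in Remark~\ref{rem:two-traces}. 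Hence the derivative vanishes on $\R_{>0}$, and the trace is constant because $\R_{>0}$ is connected.

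The main obstacle is the analytic justification. One must show that $t\mapsto e^{\alpha(t)}$ is differentiable with Duhamel's formula, and that $\nabla e^{\alpha}=0$, given only ``exponentiable''. A subtlety is that $\dot\beta e^{\alpha}$ must lie in $A_{\tr}$; this follows from the ideal property since $e^{\alpha}\in A_{\tr}$. I would first try to deduce both facts from the appendix's characterization of exponentials as solutions of $\partial_s E=\alpha E$, $E(0)=1$, together with uniqueness of such solutions. For $\nabla e^{s\alpha}$, I would show that $\nabla e^{s\alpha}$ solves the inhomogeneous equation $\partial_s X=\alpha X+(\nabla\alpha)e^{s\alpha}=\alpha X$ with $X(0)=0$, which forces $X=0$.
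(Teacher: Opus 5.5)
Your argument is correct. It uses the same ingredients as the paper's proof (Corollaries~\ref{cor:psi}--\ref{cor:t-invariance} and Remark~\ref{rem:zero-differential}): Duhamel's formula, the Bianchi identity $\nabla\alpha=0$, $\dot\alpha=\nabla\dot\beta$, $\nabla e^{s\alpha}=0$, and the fact that the trace kills graded commutators and intertwines $d_A$ with $d_P=0$. The difference is where you take the trace.

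The paper stays in the algebra. It pushes $\nabla$ through the exponential factors inside the Duhamel integral to get $\frac{d}{dt}e^{\alpha}=d_A\Psi+\commu{\beta,\Psi}$, with $\Psi=\int_0^1 e^{(1-u)\alpha}\dot\beta\,e^{u\alpha}\,du$, and only then traces. This yields a stronger, pre-trace statement: the class of $\overline{e^{\alpha}}$ in $H(\overline{\creestr{A}})$ is independent of $t$. But splitting $\tr(d_A\Psi+\commu{\beta,\Psi})$ into its two pieces requires $\Psi(t)\in\creestr{A}$. Corollary~\ref{cor:t-invariance} adds this assumption, and the Proposition as stated does not contain it.

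You instead use cyclicity first, collapsing the Duhamel integral to $\tr_A(\dot\alpha e^{\alpha})$, and then transgress with $\dot\beta e^{\alpha}$. Its traceability is automatic from the ideal property and $e^{\alpha}\in A_{\tr}$. So your route proves the Proposition exactly as stated, up to the analytic points you flag, which the paper also takes for granted: differentiating under the trace and interchanging it with the Duhamel integral.

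Your ODE-uniqueness argument for $\nabla e^{s\alpha}=0$ also fits the paper's semigroup notion of exponentiability better than the paper's termwise step from $\delta(\alpha^k)=0$ to $\delta(e^{u\alpha})=0$. That step tacitly assumes the exponential series converges.
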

\begin{proof}
See Remark~\ref{rem:zero-differential}, which deduces the statement from
Corollary~\ref{cor:t-invariance}.
\end{proof}

This is how the hypothesis is verified in practice: one exhibits a single odd
element $\beta(t)$ of which $\alpha(t)$ is the curvature.

We now come to the main statement.

Assume that
\begin{align*}
H\colon (\mathbb{R}_{>0},+)&\to (\crees{A},\cdot), &
\tau&\mapsto H_\tau
\end{align*}
is a semigroup homomorphism lifting the exponential semigroup
$\tau\mapsto e^{\tau F_0}$ of $\crees{B}$; that is, $\phi(H_\tau)=e^{\tau F_0}$
for all $\tau>0$ (see Appendix~\ref{sec.appendix} for exponentials of this kind).

\begin{remark}\label{rem:injective-lift}
If $\phi\colon A\to B$ is injective, then any family $\{H_\tau\}_{\tau>0}$ in $\crees{A}$ with $\phi(H_\tau)=e^{\tau F_0}$ is automatically a semigroup homomorphism: $\phi(H_\tau H_s)=e^{\tau F_0}e^{sF_0}=e^{(\tau+s)F_0}=\phi(H_{\tau+s})$, and injectivity gives $H_\tau H_s=H_{\tau+s}$.
\end{remark}

\begin{lemma}\label{lem:full-semigroup}
    For $j=0,1$ let $g_j\in \crees{G}$, and let $F_j\in \crees{B}$ be the elements $F_j= i(g_j)$. Assume there is a semigroup homomorphism $H\colon \mathbb{R}_{>0}\to \crees{A_{\tr}}$ such that
\[
{\phi}(H_\tau)=e^{\tau F_0}\qquad (\tau>0).
\]
Assume the action of $g_1$ on $\crees{A}$ is graded-central and let $\gamma=z_{g_1}\in Z(\grc{A})$. Put $h=H_1\bigr\vert_0\in\grc A$ and $f_j=F_j\bigr\vert_1\in B$. If the element
\[
\sum_{n=0}^\infty\int_{\Delta_n}e^{\tau_0F_0}F_1e^{\tau_1F_0}\cdots F_1e^{\tau_nF_0}\,d\sigma_n
\]
of $\crees{B}$ (equal to $e^{F_0+F_1}$, cf.\ Corollary~\ref{cor:perturbative}) has constant trace, then
\[
\sum_{n=0}^\infty\int_{\Delta_n}\mathrm{tr}_{B}\!\left(e^{\tau_0f_0}f_1e^{\tau_1f_0}\cdots f_1e^{\tau_nf_0}\right)d\sigma_n=\mathrm{tr}_{\grc{A}}\!\left(e^{\gamma} h\right).
\]
Here $\Delta_n$ and $d\sigma_n$ are as in Appendix~\ref{sec.appendix}, so that $\int_{\Delta_n}d\sigma_n=1/n!$.
\end{lemma}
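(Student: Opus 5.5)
We lift the Dyson series from $\crees{B}$ to $\crees{A}$ using the semigroup $H$ and the action $\rho$, and then apply Lemma~\ref{lem:easy-case2} to the lifted element. The point is that $e^{\tau F_0}$ has the preferred preimage $H_\tau$, and multiplication by $F_1=i(g_1)$ has the preferred preimage $\rho(g_1\otimes -)$. Compatibility \eqref{eq.rees-is-compatible} then gives
\[
\phi\bigl(H_{\tau_0}\cdot g_1\cdot H_{\tau_1}\cdots g_1\cdot H_{\tau_n}\bigr)=e^{\tau_0F_0}F_1e^{\tau_1F_0}\cdots F_1e^{\tau_nF_0}.
\]
Here the action is applied repeatedly, from right to left: one multiplies $H_{\tau_n}$ by $g_1$, then by $H_{\tau_{n-1}}$ on the left, and so on. We use that $\phi$ is multiplicative and that $\phi(g\cdot a)=i(g)\phi(a)$. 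For this we need that acting by $g_1$ commutes suitably with left multiplication by $H_\tau$. If this is not built into the notion of action, we simply keep the nested expression $H_{\tau_0}\,(g_1\cdot(H_{\tau_1}\,(g_1\cdot\cdots)))$, and $\phi$ still maps it to the desired word. We define
\[
\mathcal H\coloneqq\sum_{n\ge0}\int_{\Delta_n}H_{\tau_0}\,g_1\cdot H_{\tau_1}\cdots g_1\cdot H_{\tau_n}\,d\sigma_n\in\crees{A_{\tr}}.
\]
Since $A_{\tr}$ is an ideal, $\mathcal H$ lies in the coefficient-wise ideal. Convergence is taken in the sense of Appendix~\ref{sec.appendix}, via continuity of $\rho$ and of the product.

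By construction $\phi(\mathcal H)$ is the Dyson series, which has constant trace by hypothesis. Lemma~\ref{lem:easy-case2} (with $\grc{A}$ in place of $\gr^0A$, via Remark~\ref{rem:trace-target-degree0}) then gives
\[
\tr_B\bigl(\phi(\mathcal H)\vert_1\bigr)=\tr_{\grc A}\bigl(\mathcal H\vert_0\bigr).
\]
For the left-hand side, evaluation at $t=1$ is monoidal (Proposition~\ref{prop.monoidality-eval-smooth}), and $\tr_B$ is continuous and linear, so it may be passed inside the sums and integrals. This yields exactly $\sum_n\int_{\Delta_n}\tr_B(e^{\tau_0f_0}f_1\cdots f_1e^{\tau_nf_0})\,d\sigma_n$.

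For the right-hand side, we reduce modulo $t$ using Remark~\ref{rem:gr-action}. Acting by $g_1$ becomes multiplication by the central element $\gamma$, so
\[
\bigl(H_{\tau_0}\,g_1\cdot H_{\tau_1}\cdots g_1\cdot H_{\tau_n}\bigr)\big\vert_0=\gamma^n\,H_{\tau_0}\vert_0\cdots H_{\tau_n}\vert_0=\gamma^n\,\bigl(H_{\tau_0+\cdots+\tau_n}\bigr)\vert_0=\gamma^n h.
\]
Here centrality moves all the copies of $\gamma$ to the front, the semigroup property merges the $H$'s, and on $\Delta_n$ we have $\sum_j\tau_j=1$. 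Integrating over $\Delta_n$ contributes the factor $1/n!$, so $\mathcal H\vert_0=\sum_n\gamma^nh/n!=e^\gamma h$, and the claim follows.

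The main obstacle is analytic rather than algebraic. We must ensure that $\mathcal H$ is a genuine element of $\crees{A_{\tr}}$, with a Laurent expansion compatible with the filtration. We must also ensure that $\vert_0$, i.e.\ passing to Laurent coefficients modulo lower filtration, commutes with the infinite sum and the simplex integrals; termwise this is fine, and the product topology on $\prod_{n\ge0}\gr^n$ in $\grc A$ makes the degree-wise statement plausible. A related issue is the convergence of $e^\gamma$ in $\grc A$. I would first rely on the conventions of Appendix~\ref{sec.appendix} (exponentiability, Corollary~\ref{cor:perturbative}) to justify interchanging the limit with the series. Failing that, I would argue degree by degree in $\grc A$: each graded component receives contributions from only finitely many filtration levels of each term. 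On this approach $\gamma$ would need positive filtration degree for the sum to be well defined, which is what happens in the Getzler situation.
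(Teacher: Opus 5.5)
Your proposal is correct and follows essentially the paper's route. You lift each Dyson word to $H_{\tau_0}(g_1\cdot H_{\tau_1})\cdots(g_1\cdot H_{\tau_n})$ in $\crees{A}$; this factorized form, which the paper uses, maps correctly under $\phi$ with no commutation between the action and left multiplication needed. You then transfer constant trace to the lift via trace-compatibility of $\phi$, trade evaluation at $t=1$ for reduction mod $t$, and collapse using graded-centrality, the semigroup law and $\int_{\Delta_n}d\sigma_n=1/n!$, exactly as the paper does. Routing this through Lemma~\ref{lem:easy-case2} applied to the single summed element $\mathcal H$ is only a repackaging, and the interchanges of $\vert_0$ and the trace with the series and simplex integrals that you flag are taken for granted in the paper just as in your sketch.
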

\begin{proof}
By definition of $f_0,f_1$ and functoriality of evaluation,
\[
\mathrm{tr}_{B}\!\left(e^{\tau_0f_0}f_1e^{\tau_1f_0}\cdots f_1e^{\tau_nf_0}\right)=
\mathrm{tr}_{\crees{B}}\!\left(e^{\tau_0F_0}F_1e^{\tau_1F_0}\cdots F_1e^{\tau_nF_0}\right)\bigr\vert_1 .
\]
By the commutativity of \eqref{eq.rees-is-compatible}, $\phi(g_1\cdot H_\tau)=i(g_1)\cdot \phi(H_\tau)=F_1e^{\tau F_0}$, so, since $\phi$ is an algebra homomorphism,
\begin{align*}
\mathrm{tr}_{\crees{B}}\!\left(e^{\tau_0 F_0}F_1e^{\tau_1F_0}\cdots F_1e^{\tau_nF_0}\right)\bigr\vert_1
&=\mathrm{tr}_{\crees{B}}\!\left(\phi(H_{\tau_0})\phi(g_1\cdot H_{\tau_1})\cdots \phi(g_1\cdot H_{\tau_n})\right)\bigr\vert_1\\
&=\mathrm{tr}_{\crees{B}}\!\left(\phi\!\left(H_{\tau_0}(g_1\cdot H_{\tau_1})\cdots (g_1\cdot H_{\tau_n})\right)\right)\bigr\vert_1\\
&=\mathrm{tr}_{\crees{A}}\!\left(H_{\tau_0}(g_1\cdot H_{\tau_1})\cdots (g_1\cdot H_{\tau_n})\right)\bigr\vert_1,
\end{align*}
the last step being the trace-compatibility of $\phi$.

By hypothesis the element $\sum_n\int_{\Delta_n}e^{\tau_0F_0}F_1\cdots d\sigma_n$ of $\crees{B}$ has constant trace; hence so does its $\phi$-preimage $\sum_n\int_{\Delta_n}H_{\tau_0}(g_1\cdot H_{\tau_1})\cdots(g_1\cdot H_{\tau_n})\,d\sigma_n$ in $\crees{A}$ (Remark~\ref{rem:preimage-constant-trace}). Its trace may therefore be evaluated at $t=0$ instead of $t=1$; distributing $\vert_0$ through the (convergent) sum and integrals, and using Remark~\ref{rem:gr-action}, graded-centrality, and centrality of $\gamma$:
\begin{align*}
&\sum_{n=0}^\infty\int_{\Delta_n}\mathrm{tr}_{\crees{A}}\!\left(H_{\tau_0}(g_1\cdot H_{\tau_1})\cdots (g_1\cdot H_{\tau_n})\right)\bigr\vert_1\,d\sigma_n\\
&\quad=\sum_{n=0}^\infty\int_{\Delta_n}\mathrm{tr}_{\grc{A}}\!\left(H_{\tau_0}\!\vert_0\,(g_1\!\vert_0\cdot_0 H_{\tau_1}\!\vert_0)\cdots (g_1\!\vert_0\cdot_0 H_{\tau_n}\!\vert_0)\right)d\sigma_n\\
&\quad=\sum_{n=0}^\infty\int_{\Delta_n}\mathrm{tr}_{\grc{A}}\!\left(\gamma^{\,n}\,H_{\tau_0}\!\vert_0\,H_{\tau_1}\!\vert_0\cdots H_{\tau_n}\!\vert_0\right)d\sigma_n\\
&\quad=\sum_{n=0}^\infty\int_{\Delta_n}\mathrm{tr}_{\grc{A}}\!\left(\gamma^{\,n}\,(H_{\tau_0}H_{\tau_1}\cdots H_{\tau_n})\bigr\vert_0\right)d\sigma_n\\
&\quad=\sum_{n=0}^\infty\int_{\Delta_n}\mathrm{tr}_{\grc{A}}\!\left(\gamma^{\,n}\,H_{\tau_0+\tau_1+\dots+\tau_n}\bigr\vert_0\right)d\sigma_n\\
&\quad=\sum_{n=0}^\infty\int_{\Delta_n}\mathrm{tr}_{\grc{A}}\!\left(\gamma^{\,n}\,H_{1}\bigr\vert_0\right)d\sigma_n\\
&\quad=\sum_{n=0}^\infty\left(\int_{\Delta_n}d\sigma_n\right)\mathrm{tr}_{\grc{A}}(\gamma^{\,n}h)
=\sum_{n=0}^\infty\frac{1}{n!}\,\mathrm{tr}_{\grc{A}}(\gamma^{\,n}h)
=\mathrm{tr}_{\grc{A}}\!\left(e^{\gamma}h\right).
\end{align*}
Here we used that $\vert_0$ is an algebra homomorphism, that $H$ is a semigroup homomorphism (so $H_{\tau_0}\cdots H_{\tau_n}=H_{\tau_0+\dots+\tau_n}$ and $\tau_0+\dots+\tau_n=1$ on $\Delta_n$), and that $\int_{\Delta_n}d\sigma_n=1/n!$ (Appendix~\ref{sec.appendix}).
\end{proof}

 \begin{corollary}[Abstract Index Theorem]\label{thm.index}
 Under the hypotheses of Lemma~\ref{lem:full-semigroup}, and assuming $e^{f_0+f_1}$ exists in $B$,
 \[
 \mathrm{tr}_B\!\left(e^{f_0+f_1}\right)=
 \mathrm{tr}_{\grc{A}}\!\left(e^{\gamma}h\right).
 \]
 \end{corollary}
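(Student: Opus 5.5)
The corollary should follow from Lemma~\ref{lem:full-semigroup}, which already identifies the Dyson series with $\mathrm{tr}_{\grc{A}}(e^{\gamma}h)$. What remains is to identify the left-hand side of that lemma with $\mathrm{tr}_B(e^{f_0+f_1})$. I will do this in $B$ itself, at the fixed parameter value $t=1$, via the perturbative (Duhamel/Dyson) expansion from the appendix. Concretely, I would invoke Corollary~\ref{cor:perturbative} in $B$, i.e.\ with $B$ viewed under its constant filtration and with the elements $f_0,f_1\in B$ in place of $F_0,F_1$. This should give
\[
e^{f_0+f_1}=\sum_{n=0}^\infty\int_{\Delta_n}e^{\tau_0f_0}f_1e^{\tau_1f_0}\cdots f_1e^{\tau_nf_0}\,d\sigma_n ,
\]
provided $e^{f_0+f_1}$ exists, which is assumed, and $e^{\tau f_0}$ makes sense, which holds because $e^{\tau f_0}=\phi(H_\tau)\vert_1$.

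A more economical route, avoiding a second application of the appendix, is to evaluate the identity already recorded in Lemma~\ref{lem:full-semigroup} at $t=1$. That identity reads
\[
e^{F_0+F_1}=\sum_n\int_{\Delta_n}e^{\tau_0F_0}F_1\cdots F_1e^{\tau_nF_0}\,d\sigma_n
\qquad\text{in }\crees{B}.
\]
Evaluation $\vert_1$ is an algebra morphism (Proposition~\ref{prop.monoidality-eval-smooth}) and is continuous, since it is simply evaluation of a smooth function at a point. It therefore commutes with the convergent sum and with the simplex integrals. It also commutes with exponentials, because $e^{F}\vert_1=e^{F(1)}$ as in the proof of Lemma~\ref{lem:easy-case}. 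Hence $e^{F_0+F_1}\vert_1=e^{f_0+f_1}$, and the evaluated right-hand side is exactly the Dyson series in $f_0,f_1$.

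Next I apply $\mathrm{tr}_B$. It is linear and continuous (Definition~\ref{def.fdga-with-trace}), so it passes through the sum over $n$ and through the integrals over $\Delta_n$. The result is
\[
\mathrm{tr}_B(e^{f_0+f_1})=\sum_n\int_{\Delta_n}\mathrm{tr}_B\!\left(e^{\tau_0f_0}f_1\cdots f_1e^{\tau_nf_0}\right)d\sigma_n .
\]
By Lemma~\ref{lem:full-semigroup}, the right-hand side equals $\mathrm{tr}_{\grc{A}}(e^{\gamma}h)$, which is the claim.

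I expect the main obstacle to be analytic. One must justify interchanging $\vert_1$ and $\mathrm{tr}_B$ with the infinite sum and the integrals, and check that the series converges in the topology in which the trace is continuous. The plan is to take the convergence furnished by Corollary~\ref{cor:perturbative}, which gives convergence in $\crees{B}$ and hence pointwise in $B$ at $t=1$. Combined with the continuity of $\mathrm{tr}_B$, and with the fact that the integrands lie in $B_{\tr}$ (as images under $\phi$ of elements of $\crees{A_{\tr}}$), this should make the term-by-term application legitimate. If needed, I would additionally verify that the exponential $e^{f_0+f_1}$, whose existence is assumed in $B$, coincides with the evaluation at $1$ of the element of $\crees{B}$ appearing in Lemma~\ref{lem:full-semigroup}; uniqueness of solutions to the defining ODE from the appendix would give this.
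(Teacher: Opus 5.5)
Your first route is exactly the paper's proof: expand $e^{f_0+f_1}$ in $B$ via Corollary~\ref{cor:perturbative} with $f_j=F_j\vert_1$, pass the linear continuous trace $\mathrm{tr}_B$ through the sum and the simplex integrals, and conclude with Lemma~\ref{lem:full-semigroup}. Your ``economical'' variant, which evaluates the $\crees{B}$-identity at $t=1$, is an equally valid minor rephrasing, and the analytic interchange issues you flag are treated at the same formal level in the paper.
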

 \begin{proof}
By the perturbative expansion (Corollary~\ref{cor:perturbative}), applied in $B$ with $f_j=F_j\vert_1$,
\[
e^{f_0+f_1}=\sum_{n=0}^\infty\int_{\Delta_n}e^{\tau_0f_0}f_1e^{\tau_1f_0}\cdots f_1e^{\tau_nf_0}\,d\sigma_n .
\]
Applying the (linear, continuous) trace $\mathrm{tr}_B$ and using Lemma~\ref{lem:full-semigroup},
\[
\mathrm{tr}_B\!\left(e^{f_0+f_1}\right)
=\sum_{n=0}^\infty\int_{\Delta_n}\mathrm{tr}_B\!\left(e^{\tau_0f_0}f_1e^{\tau_1f_0}\cdots f_1e^{\tau_nf_0}\right)d\sigma_n
=\mathrm{tr}_{\grc{A}}\!\left(e^{\gamma}h\right).
\]
 \end{proof}

\begin{corollary}[Abstract Index Theorem, curvature form]\label{thm.index-beta}
Let $g_0,g_1$, $H$, $\gamma$ and $h$ be as in Lemma~\ref{lem:full-semigroup}.
Let $\beta=\beta_0+\beta_1\in\crees{B}$ be odd with $d_B\beta_0=0$, and set
\[
F_0\coloneqq\beta_0^2,
\qquad
F_1\coloneqq d_B\beta_1+\commu{\beta_0,\beta_1}+\beta_1^2 ,
\]
so that $F_0+F_1=d_B\beta+\beta^2$. Assume
\begin{enumerate}[label=\textup{(\roman*)}]
  \item $F_0$ is exponentiable and $e^{sF_0}\in\creestr{B}$ for every $s>0$;
  \item $\beta_1$ and $d_B\beta_1$ are bounded, and $\commu{\beta_0,\beta_1}$ is
  relatively bounded with respect to $F_0$: writing $\Delta\coloneqq 1-F_0$, the operator
  $\commu{\beta_0,\beta_1}\Delta^{-1/2}$ is bounded and
  $\lVert\Delta^{1/2}e^{sF_0}\rVert=O(s^{-1/2})$ as $s\to0^+$;
  \item $F_j=i(g_j)$ for $j=0,1$.
\end{enumerate}
Put $f_j\coloneqq F_j\bigr\vert_1$.
Then $F_0+F_1$ is exponentiable with $e^{s(F_0+F_1)}\in\creestr{B}$
for every $s>0$, and
\[
\mathrm{tr}_B\!\left(e^{f_0+f_1}\right)=\mathrm{tr}_{\grc{A}}\!\left(e^{\gamma}h\right).
\]
\end{corollary}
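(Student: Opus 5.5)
The plan is to reduce everything to Corollary~\ref{thm.index}. Lemma~\ref{lem:full-semigroup} has a single extra hypothesis: the perturbative series for $e^{F_0+F_1}$ in $\crees{B}$ must have constant trace. That hypothesis will come from the constant-trace criterion, Proposition~\ref{prop:constant-trace-criterion}, applied to $\beta$. So the argument has three parts: an algebraic identity, an analytic existence statement, and the final assembly.

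\emph{Algebraic identity.} First I check that $F_0+F_1=d_B\beta+\beta^2$. Since $d_B\beta_0=0$, we have $d_B\beta=d_B\beta_1$. Because $\beta_0,\beta_1$ are odd, the graded commutator is $\commu{\beta_0,\beta_1}=\beta_0\beta_1+\beta_1\beta_0$. Hence $\beta^2=\beta_0^2+\commu{\beta_0,\beta_1}+\beta_1^2$, and summing the two gives the claim.

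\emph{Analytic existence (the main obstacle).} Next I prove that $F_0+F_1$ is exponentiable and that $e^{s(F_0+F_1)}\in\creestr{B}$ for every $s>0$. I would define $e^{s(F_0+F_1)}$ through the Dyson series of Appendix~\ref{sec.appendix}, with $F_1$ treated as a perturbation of $F_0$:
\[
\sum_{n\ge0}s^n\int_{\Delta_n}e^{s\tau_0F_0}F_1e^{s\tau_1F_0}\cdots F_1e^{s\tau_nF_0}\,d\sigma_n .
\]
Convergence is handled term by term in $F_1$. The bounded pieces $d_B\beta_1$ and $\beta_1^2$ contribute $O(1)$ per factor. For the unbounded piece $\commu{\beta_0,\beta_1}$, I write $\commu{\beta_0,\beta_1}e^{s\tau_jF_0}=(\commu{\beta_0,\beta_1}\Delta^{-1/2})(\Delta^{1/2}e^{s\tau_jF_0})$. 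By hypothesis~(ii) its norm is $O((s\tau_j)^{-1/2})$.

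This gives an $n$-th term bounded by $C^n s^{n/2}\int_{\Delta_n}\prod_j\tau_j^{-1/2}\,d\sigma_n$. The Dirichlet integral evaluates this to roughly $C^n s^{n/2}\,\Gamma(1/2)^{n}/\Gamma(n/2+1)$, which is summable. One factor $e^{s\tau_jF_0}$ with $\tau_j\ge 1/(n+1)$ always remains. Since $\creestr{B}$ is an ideal, that factor lies in $\creestr{B}$ by hypothesis~(i), and so does each summand. To get membership of the whole sum, I would use continuity of the trace and closedness of the trace ideal, which is implicitly part of the framework in which Corollary~\ref{cor:perturbative} is stated. This closure step is the part I would check most carefully. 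The smoothness and Laurent-expansion requirements in $t$ would follow from the uniform convergence of the series.

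\emph{Assembly.} With existence in hand, Proposition~\ref{prop:constant-trace-criterion} applies to $\beta$ and $\alpha=F_0+F_1$: the target $\K_c$ has zero differential, $\alpha$ is exponentiable, and $e^{s\alpha}\in\creestr{B}$. So $e^{F_0+F_1}$ has constant trace. By Corollary~\ref{cor:perturbative}, $e^{F_0+F_1}$ equals the Dyson series appearing in Lemma~\ref{lem:full-semigroup}, so that lemma's hypothesis is met. Hypothesis~(iii) supplies $F_j=i(g_j)$, and $g_0,g_1,H,\gamma,h$ are as in the lemma.

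It remains to check that $e^{f_0+f_1}$ exists in $B$. Evaluation at $t=1$ is an algebra morphism (Proposition~\ref{prop.monoidality-eval-smooth}), so $e^{f_0+f_1}=e^{F_0+F_1}\vert_1$. Corollary~\ref{thm.index} then yields $\mathrm{tr}_B(e^{f_0+f_1})=\mathrm{tr}_{\grc{A}}(e^{\gamma}h)$.
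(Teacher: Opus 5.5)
Your proposal is correct and follows the paper's proof essentially step for step. Both use the same algebraic identity for $\beta^2$, the same estimate $\lVert F_1e^{sF_0}\rVert=O(s^{-1/2})$ via the factorisation through $\Delta^{\pm1/2}$, and the same pigeonhole argument ($\tau_i\geq(n+1)^{-1}$) combined with the ideal property of $\creestr{B}$ for each Dyson term; you then conclude, as the paper does, with Proposition~\ref{prop:constant-trace-criterion} followed by Corollary~\ref{thm.index}.

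The differences are minor. You bound the Dyson series directly with the Gamma-function Dirichlet integral, where the paper simply cites Lemma~\ref{lem:DP.exp}. You also check explicitly that $e^{f_0+f_1}=e^{F_0+F_1}\vert_1$ exists. At the closure step the paper just assumes the series converges in $\creestr{B}$, and that assumption is genuinely needed: uniform convergence alone would not give the required Laurent expansion in $t$.
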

\begin{proof}
Since $\beta_0,\beta_1$ are odd, $\beta^2=\beta_0^2+\commu{\beta_0,\beta_1}+\beta_1^2$, and
$d_B\beta_0=0$ gives $d_B\beta+\beta^2=F_0+F_1$.

By (ii) also $\beta_1^2$ is bounded, so $F_1\Delta^{-1/2}$ is bounded; hence
$\lVert F_1e^{sF_0}\rVert=O(s^{-1/2})$, which is integrable at $s=0$, so that $F_1$ is a
Miyadera perturbation of $F_0$ and Lemma~\ref{lem:DP.exp} applied to $F_0$ and $F_1$
makes $F_0+F_1$ exponentiable. Expanding $e^{s(F_0+F_1)}$ by
Corollary~\ref{cor:perturbative}, in each term
$e^{\tau_0sF_0}F_1\cdots F_1e^{\tau_nsF_0}$ one has $\sum_i\tau_i=1$, so some
$\tau_i\geq(n+1)^{-1}>0$ and that factor lies in $\creestr{B}$ by (i); as $\creestr{B}$
is a two-sided ideal (Proposition~\ref{prop.they-coincide}) the whole term does, and we
assume the series converges there.

Finally, $F_0+F_1$ is the curvature of $\beta$, so
Proposition~\ref{prop:constant-trace-criterion} applies and $e^{F_0+F_1}$ has constant
trace. Corollary~\ref{thm.index} gives the conclusion.
\end{proof}

\begin{remark}\label{rem:analytic-difficulty}
In the geometric applications the main difficulties are of an analytic nature, and they exactly correspond to checking that
assumptions (i) and (ii) are satisfied. That is, one needs trace-class estimates for the semigroup generated by $F_0$,
boundedness results for $\beta_1$ and $d_B\beta_1$, and relative boundedness for $\commu{\beta_0,\beta_1}$. In the application to the Atiyah-Singer index theorem, these analytic estimates are established in
\cite{GuneysuLudewig,HanischLudewig}, and provide the hard analytic background on which \cite{Ludewig_2023} rests.\end{remark}

\begin{remark}\label{rem:interpretation}
The left-hand side of Corollary~\ref{thm.index} is an analytic quantity, the trace in $B$ of the exponential of the ``total curvature'' $f_0+f_1$. The right-hand side is topological in nature: by Remark~\ref{rem:trace-target-degree0} it is $\tr_{\gr^0A}\bigl((e^\gamma h)_0\bigr)$, a trace computed entirely on the associated graded, where $\gamma$ plays the r\^ole of a curvature form and $h$ that of a symbol. This is the abstract shell of the Getzler-rescaling identity; see Section~\ref{sec:geometry}.
\end{remark}

\begin{remark}
When working with filtered objects it can happen that the object that appears most natural is a shifted filtered one. For filtered complexes this is inconsequential, since the shift $F\mapsto F[n]$ is a self-equivalence of $\cat{FDGM}_\K$.

For FDGAs, on the other hand, the shift does not preserve the algebra structure and therefore interacts nontrivially with $\rees$ and $\crees$ (Definition~\ref{def:shifted-fdga} and Remark~\ref{rem:shifted-rees}). Nonetheless, the categories of FDGAs and of $n$-shifted FDGAs are equivalent, via $(A,F)\mapsto (A,F[-n])$. Consequently every statement of Sections~\ref{sec:filtered-complexes}--\ref{sec:index} has a shifted counterpart, and --- more importantly for the applications --- every statement about $n$-shifted FDGAs translates back to the unshifted setting by applying the shift. In particular, the index theorem of Corollary~\ref{thm.index} holds verbatim in the shifted setting once all Rees modules, evaluations and traces are transported along the equivalence $(A,F)\leftrightarrow(A,F[-n])$; concretely, one replaces $\rees A$ by $t^{n}\rees A$ as in Remark~\ref{rem:shifted-rees}.

The geometric example of Section~\ref{sec:geometry} is shifted: there the natural filtration is the Clifford-order filtration, and the trace becomes filtration-compatible only after an appropriate shift.

\end{remark}

\section{Examples and geometric motivation}\label{sec:geometry}

We indicate here how the Getzler and Ludewig--Yi proof of the Atiyah--Singer index theorem for spin manifolds \cite{Getzler1986,BGV,Ludewig_2023} fits into the general algebraic framework of Section~\ref{sec:index}, of which it was the motivating example.

Let $M$ be a closed Riemannian spin manifold of even dimension $2\ell$, with spinor bundle $S\to M$. The kernels of operators acting on spinors are sections of $S\boxtimes S^{*}\to M\times M$. Fibrewise, $\mathrm{End}(S_m)\cong \cl{T_mM}$ is a Clifford algebra, and the \emph{Clifford filtration} filters it by the number of Clifford generators; on the top piece the Clifford supertrace $\str_{\mathrm{Cl}}$ picks out (up to a normalization constant) the coefficient of the volume element, and it \emph{vanishes on elements of Clifford order} $<2\ell$. Getzler rescaling assigns to a differential operator a \emph{Getzler order}, refining the order of the operator by its Clifford degree; see \cite[Ch.~4]{BGV}.

\begin{lemma}\label{lem:geometric}
On the sections of $S\boxtimes S^{*}$, in the notation of \cite{Ludewig_2023}, define the (reverse Getzler) filtration
\[
F^p=\Bigl\{\sigma\ :\ \operatorname{Clifford\ order}\bigl(D\sigma(-,m)\big\vert_m\bigr)\le p+q\ \text{ for every }D\text{ of Getzler order}\le q\Bigr\}.
\]
Since the identity has Getzler order $0$, every $\sigma\in F^p$ has, on the diagonal, Clifford order at most $p$. Consider the shifted filtration $F[2\ell]$, i.e.\ $F[2\ell]^p=F^{p+2\ell}$. Then for every $\sigma\in F[2\ell]^k$ with $k<0$ the restriction of $\sigma$ to the diagonal has Clifford order at most $2\ell-1$, whence
\[
\str_{\mathrm{Cl}}\,\sigma_{(m,m)}=0 \qquad (m\in M).
\]
Consequently, for each $m\in M$, the map
\[
\sigma\longmapsto \str_{\mathrm{Cl}}\,\sigma_{(m,m)}
\]
is a morphism of filtered algebras from $\bigl(\Gamma(S\boxtimes S^{*}),F[2\ell]\bigr)$ to $\C_I$, i.e.\ a trace with values in the positively filtered ground field.
\end{lemma}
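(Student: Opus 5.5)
The statement unwinds the definition of $F^p$ and then counts Clifford orders. I will proceed in three steps, following the order in which the lemma is phrased.

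\emph{Step 1: diagonal Clifford order.} Take $D=\mathrm{id}$, which has Getzler order $q=0$. The defining condition of $F^p$ then says that $\sigma(-,m)\vert_m=\sigma_{(m,m)}$ has Clifford order at most $p$, and this holds for every $m$. Now let $\sigma\in F[2\ell]^k=F^{k+2\ell}$ with $k<0$. Then $k+2\ell\le 2\ell-1$, so $\sigma_{(m,m)}$ lies in the Clifford filtration piece of order at most $2\ell-1$ of $\cl{T_mM}\cong\mathrm{End}(S_m)$.

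\emph{Step 2: vanishing of the supertrace.} On $\cl{T_mM}$ the supertrace picks out, up to a constant, the coefficient of the volume element $e_1\cdots e_{2\ell}$. It therefore kills every monomial $e_I$ with $|I|<2\ell$. This is the standard fact that $\str(e_I)=0$ for $I\neq\{1,\dots,2\ell\}$ (e.g.\ \cite[Ch.~3]{BGV}). Hence $\str_{\mathrm{Cl}}\sigma_{(m,m)}=0$.

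\emph{Step 3: the filtered morphism.} I must show the map sends $F[2\ell]^k$ into $I^k\C$ for every $k$. For $k<0$ the target is $I^k\C=0$, and Step 2 gives exactly this. For $k\ge0$ the target is $I^k\C=\C$, so there is nothing to check. Linearity is clear, and since $\C_I$ carries the zero differential, this establishes the filtered-map property the lemma asks for.

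The phrase ``morphism of filtered algebras'' must be read in the sense of Definition~\ref{def.fdga-with-trace}: a filtered cochain map with values in $\C_I$ that vanishes on graded commutators. I will justify the commutator vanishing by noting that the pointwise supertrace of the diagonal, integrated over $M$, is the supertrace of trace-class kernels. The pointwise version alone is only a filtered linear map whose integral is the trace. The main (mild) obstacle is to make sure the shifted filtration really is multiplicative in the $2\ell$-shifted sense (Definition~\ref{def:shifted-fdga}). I would cite \cite{Ludewig_2023} for the fact that composition of kernels adds reverse Getzler orders, and only use the inclusion $F^p\subseteq\{\text{diagonal Clifford order}\le p\}$ for the vanishing statement itself.
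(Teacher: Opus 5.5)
Your Steps 1--3 are exactly the paper's argument. The case $D=\mathrm{id}$ gives diagonal Clifford order $\le p$; the shift places $F[2\ell]^k$ ($k<0$) in diagonal order $\le 2\ell-1$; $\str_{\mathrm{Cl}}$ vanishes there; and this is precisely filteredness into $\C_I$.

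\emph{Commutator property.} Here you diverge from the paper, and you are right to. The paper invokes ``the Clifford-supertrace property'', but that only kills commutators in $\mathrm{Cl}(T_mM)$. The product on $\Gamma(S\boxtimes S^*)$ is composition of kernels, $(\sigma\tau)(m,m)=\int_M\sigma(m,y)\tau(y,m)\,dy$, which is not a fibrewise Clifford product.

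For a concrete counterexample, take sections $s_1,s_2$ of $S^+$ with disjoint supports, and set $\sigma=s_1\boxtimes s_2^*$, $\tau=s_2\boxtimes s_1^*$ (both even). Then $(\sigma\tau)(m,m)=\lVert s_2\rVert^2\, s_1(m)\otimes s_1(m)^*$ has supertrace $\lVert s_2\rVert^2|s_1(m)|^2\neq0$ wherever $s_1(m)\neq0$. At those points $(\tau\sigma)(m,m)=\lVert s_1\rVert^2\, s_2(m)\otimes s_2(m)^*=0$.

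So for fixed $m$ the map $\sigma\mapsto\str_{\mathrm{Cl}}\sigma_{(m,m)}$ is not a trace. The lemma's final claim is false as literally stated, and your proposal rightly does not prove it. Your closing remark already concedes this; make it explicit and state what you actually prove:
\begin{itemize}
\item each pointwise map is a filtered map into $\C_I$;
\item $\sigma\mapsto\int_M\str_{\mathrm{Cl}}\sigma_{(m,m)}\,dm$ vanishes on graded commutators, by Fubini together with $\str(ab)=(-1)^{|a||b|}\str(ba)$ for $a\in\mathrm{Hom}(S_y,S_m)$, $b\in\mathrm{Hom}(S_m,S_y)$;
\item this integrated map is still filtered into $\C_I$, because the integrand vanishes for \emph{every} $m$ when $k<0$.
\end{itemize}

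\emph{Multiplicativity.} The step you call a mild obstacle is a genuine gap, and the paper has the same gap. The filtration only sees jets along the diagonal, and the $D$ in its definition are differential operators, hence local. So every kernel vanishing near the diagonal lies in $F^p$ for all $p$. The $\sigma,\tau$ above are such kernels, yet $\sigma\tau$ has nonzero diagonal supertrace, hence diagonal Clifford order $2\ell$, so $\sigma\tau\notin F^{2\ell-1}$.

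Thus $F$, and hence $F[2\ell]$, is not multiplicative up to any fixed shift on all of $\Gamma(S\boxtimes S^*)$. No citation can supply this: not ``composition adds reverse Getzler orders'' from \cite{Ludewig_2023}, and not the subadditivity of the Getzler order of differential operators \cite[Ch.~4]{BGV} that the paper invokes, which says nothing about convolution of kernels. To obtain a shifted FDGA you must restrict to a class of kernels whose off-diagonal behaviour is controlled, or change the filtration. The vanishing statement, which is the substantive content of the lemma, is unaffected.
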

\begin{proof}
That $\sigma\in F^p$ has Clifford order $\le p$ on the diagonal is the case $q=0$, $D=\mathrm{id}$ of the defining condition. Under the shift $F[2\ell]^k=F^{k+2\ell}$, an element of $F[2\ell]^k$ with $k<0$ lies in $F^{k+2\ell}$ with $k+2\ell<2\ell$, so its diagonal restriction has Clifford order $\le 2\ell-1$; as recalled above, $\str_{\mathrm{Cl}}$ annihilates such elements, giving $\str_{\mathrm{Cl}}\sigma_{(m,m)}=0$. Thus $\str_{\mathrm{Cl}}(\,\cdot\,)_{(m,m)}$ vanishes on $F[2\ell]^{-1}$, i.e.\ it is filtered of order $0$ into $\C_I$; that it vanishes on graded commutators is the Clifford-supertrace property. Multiplicativity of the filtration is exactly the statement that the Getzler order is subadditive under composition \cite[Ch.~4]{BGV}.
\end{proof}

Thus $\bigl(\Gamma(S\boxtimes S^{*}),F[2\ell]\bigr)$, with the diagonal Clifford supertrace, is an instance of a (shifted) FDGA with trace whose target is $\K_I$, exactly as required by Section~\ref{sec:index}. In the localization statement of \cite{Ludewig_2023} the r\^ole of $B$ is played by an operator algebra with constant filtration, that of $A$ by the Clifford-filtered kernel algebra above, and that of $G$ by the infinitesimal rescaling symmetry; the limit $t\to0^+$ of Corollary~\ref{thm.index} is the passage to the symbol, producing the $\hat A$-form on the associated graded. Our aim in highlighting this algebraic machinery of the proof, separating it from the analytical aspects, is to be able to apply the same construction in other settings as soon as one recognizes the theorem's \emph{dramatis person\ae}. A first application, in a future work, will be the index theorem for families.

\appendix
\section{Simplices, semigroups, exponentials and perturbations}\label{sec.appendix}

We collect here the analytic results used in the main text: the integral over the standard simplex, the perturbative (Dyson) expansion of an exponential, and the constant-trace criterion of Getzler--Chern--Simons type. Throughout, $R$ is a unital Banach algebra unless otherwise stated; in the applications $R=\crees{B}$ or $R=B$.

\subsection{The simplex and the Dirichlet integral}\label{subsec:simplex}

Let
\[
\Delta_n=\Bigl\{(\tau_0,\dots,\tau_n)\in \mathbb{R}^{n+1}\ \big|\ \tau_i\geq 0,\ \textstyle\sum_{i=0}^n \tau_i=1\Bigr\}
\]
be the standard $n$-simplex. We parametrize it by the coordinates $(\tau_1,\dots,\tau_n)$ ranging over
\[
\tilde{\Delta}_n=\Bigl\{(\tau_1,\dots,\tau_n)\in \mathbb{R}^{n}\ \big|\ \tau_i\geq 0,\ \textstyle\sum_{i=1}^n \tau_i\leq1\Bigr\},\qquad \tau_0=1-\textstyle\sum_{i=1}^n\tau_i,
\]
and we equip $\Delta_n$ with the measure
\[
d\sigma_n\coloneqq d\tau_1\,d\tau_2\cdots d\tau_n .
\]
With this (unnormalized) convention $\int_{\Delta_n}d\sigma_n=\operatorname{vol}(\tilde{\Delta}_n)=1/n!$.

\begin{lemma}[Dirichlet integral; cf.\ \cite{Petrov}]\label{lemma.petrov}
For all integers $k_0,\dots,k_n\geq 0$,
\[
\int_{\Delta_n}\tau_0^{k_0}\tau_1^{k_1}\cdots\tau_n^{k_n}\,d\sigma_n
=\frac{k_0!\,k_1!\cdots k_n!}{\bigl(n+k_0+\cdots +k_n\bigr)!}.
\]
In particular $\int_{\Delta_n}d\sigma_n=1/n!$.
\end{lemma}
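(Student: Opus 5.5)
We prove the Dirichlet integral formula by induction on $n$, via an iterated Beta-integral computation in the coordinates $(\tau_1,\dots,\tau_n)\in\tilde\Delta_n$ with $\tau_0=1-\sum_{i\ge1}\tau_i$. The case $n=0$ is trivial: $\Delta_0$ is a point, the integral is $1$, and the right-hand side is $k_0!/k_0!=1$. Rather than induct on the identity as stated, we prove the slightly stronger scaled version
\[
I_n(s;k_0,\dots,k_n)\coloneqq\int_{\{\tau_i\ge0,\ \sum_{i\ge1}\tau_i\le s\}}\Bigl(s-\textstyle\sum_{i\ge1}\tau_i\Bigr)^{k_0}\tau_1^{k_1}\cdots\tau_n^{k_n}\,d\tau_1\cdots d\tau_n
=s^{\,n+\sum k_i}\,\frac{k_0!\cdots k_n!}{(n+\sum k_i)!},
\]
for $s>0$. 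The substitution $\tau_i=s\tau_i'$ reduces it to $s=1$, which is the claim. The scaling factor is $s^{\sum_{i\ge0}k_i}$ from the integrand and $s^n$ from the Jacobian.

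\textbf{Step 1: integrate out $\tau_n$.} By Fubini, for fixed $\tau_1,\dots,\tau_{n-1}$ with $u\coloneqq s-\sum_{i<n}\tau_i\ge0$, the innermost integral is
\[
\int_0^u(u-\tau_n)^{k_0}\tau_n^{k_n}\,d\tau_n=u^{k_0+k_n+1}\,\frac{k_0!\,k_n!}{(k_0+k_n+1)!}.
\]
This is the one-variable Beta integral $B(k_0+1,k_n+1)$. We verify it by repeated integration by parts, or equivalently by induction on $k_n$: each integration by parts moves one power from $\tau_n$ onto $(u-\tau_n)$.

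\textbf{Step 2: apply the inductive hypothesis.} What remains is
\[
\frac{k_0!\,k_n!}{(k_0+k_n+1)!}\,I_{n-1}\bigl(s;\,k_0+k_n+1,k_1,\dots,k_{n-1}\bigr).
\]
The inductive hypothesis gives
\[
I_{n-1}\bigl(s;\,k_0+k_n+1,k_1,\dots,k_{n-1}\bigr)=s^{\,n-1+\sum k_i+1}\,\frac{(k_0+k_n+1)!\,k_1!\cdots k_{n-1}!}{(n-1+\sum k_i+1)!}.
\]
The factor $(k_0+k_n+1)!$ cancels, leaving exactly $s^{\,n+\sum k_i}\,k_0!\cdots k_n!/(n+\sum k_i)!$. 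This closes the induction.

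The special case $k_i=0$ for all $i$ gives $\int_{\Delta_n}d\sigma_n=1/n!$.

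The only delicate point is the bookkeeping. The exponent of the "$\tau_0$" slot must absorb $k_n+1$ at each step, which is why we carry the scaling parameter $s$ and the generalized exponent $k_0$ through the induction. No genuine obstacle arises: Fubini applies because the integrand is a nonnegative polynomial on a compact set.
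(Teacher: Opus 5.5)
Your proof is correct, but it takes a different route from the paper's.

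The paper does not induct. It evaluates $\int_{(\R_{\geq 0})^{n+1}}\prod_{i=0}^n e^{-s_i}s_i^{k_i}\,ds_0\cdots ds_n$ in two ways:
\begin{itemize}
\item by Fubini, which gives $\prod_i k_i!$;
\item by the substitution $s_i=r\tau_i$ with $r=\sum_i s_i$, whose Jacobian $r^n$ splits the integral as $(n+\sum_i k_i)!$ times the simplex integral.
\end{itemize}
That argument is a single computation with no exponent bookkeeping. It extends verbatim to real exponents $k_i>-1$, with $\Gamma$-functions in place of factorials. Its cost is an improper integral over an unbounded domain and the Jacobian of the ``simplicial polar coordinates'' $(r,\tau_1,\dots,\tau_n)$.

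Your iterated Beta-integral induction works on compact domains only and needs nothing beyond one-variable integration by parts. Its cost is the bookkeeping in which the $\tau_0$-slot absorbs $k_n+1$ at each step. Your arithmetic there checks out: the factor $(k_0+k_n+1)!$ cancels as you claim.

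The scaling parameter $s$ is not actually needed. At $s=1$, after you integrate out $\tau_n$, the factor $u=1-\sum_{i<n}\tau_i$ is exactly the $\tau_0$-coordinate of $\Delta_{n-1}$. Since the lemma is already stated for all exponent tuples $(k_0,\dots,k_n)$, its instance at level $n-1$ with exponents $(k_0+k_n+1,k_1,\dots,k_{n-1})$ serves as the inductive hypothesis without any strengthening.
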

\begin{proof}
We compute the integral
\[
I=\int_{(\mathbb{R}_{\geq 0})^{n+1}}\left(\prod_{i=0}^n e^{-s_i}s_i^{k_i}\right)ds_0\cdots ds_n
\]
in two ways. First, by Fubini and the definition of the factorial,
\[
I=\prod_{i=0}^n\int_0^{+\infty}e^{-s_i}s_i^{k_i}\,ds_i=\prod_{i=0}^n k_i! .
\]
Second, substitute $s_i=r\tau_i$ with $r=\sum_{i=0}^n s_i\in[0,+\infty)$ and $(\tau_0,\dots,\tau_n)\in\Delta_n$; the Jacobian gives $ds_0\cdots ds_n=r^n\,dr\,d\sigma_n$, whence
\[
I=\int_0^{+\infty}e^{-r}r^{\,n+\sum_i k_i}\,dr\cdot\int_{\Delta_n}\prod_{i=0}^n\tau_i^{k_i}\,d\sigma_n
=\bigl(n+\textstyle\sum_i k_i\bigr)!\int_{\Delta_n}\prod_{i=0}^n\tau_i^{k_i}\,d\sigma_n .
\]
Comparing the two evaluations gives the claim.
\end{proof}

\begin{lemma}[Perturbative expansion]\label{lem:pert.exp}
Let $A,B\in R$ be such that the exponential series of $A$ and of $A+B$ converge. Then
\[
e^{A+B}=\sum_{n=0}^\infty \int_{\Delta_n}e^{\tau_0A}Be^{\tau_1A}B\cdots Be^{\tau_{n-1}A}Be^{\tau_nA}\,d\sigma_n .
\]
\end{lemma}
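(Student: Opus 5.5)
The approach is Duhamel's formula iterated, with the resulting series resummed and compared against the series of $e^{A+B}$. We work in the unital Banach algebra $R$ and introduce a scalar parameter $s\in[0,1]$. Set $U(s)=e^{s(A+B)}$ and $V(s)=e^{sA}$. Both are defined by convergent power series, since convergence of the exponential series at $s=1$ gives absolute convergence for all $|s|\le 1$ in a Banach algebra. Both are differentiable, with $U'=(A+B)U$ and $V'=AV$.

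The first step is the Duhamel identity
\[
e^{A+B}=e^{A}+\int_0^1 e^{(1-u)A}\,B\,e^{u(A+B)}\,du .
\]
To prove it, differentiate $u\mapsto e^{(1-u)A}e^{u(A+B)}$. Its derivative is $e^{(1-u)A}Be^{u(A+B)}$ because the $A$-terms cancel. Integrating over $[0,1]$ gives the identity; this is the fundamental theorem of calculus for $R$-valued $C^1$ functions.

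The second step is to iterate. Substitute the same identity, rescaled to time $u$, for $e^{u(A+B)}$ inside the integral, and repeat $N$ times. This produces the sum of the terms $n=0,\dots,N$, each an integral over the ordered region $0\le u_n\le\dots\le u_1\le 1$, plus a remainder containing one factor $e^{u_{N+1}(A+B)}$. The change of variables $\tau_0=1-u_1$, $\tau_i=u_i-u_{i+1}$, $\tau_n=u_n$ has Jacobian of absolute value $1$. It identifies the ordered region with $\tilde\Delta_n$ and turns the $n$-th term into $\int_{\Delta_n}e^{\tau_0A}B\cdots Be^{\tau_nA}\,d\sigma_n$, with exactly the measure $d\sigma_n$ fixed above.

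The third step, and the main obstacle, is showing that the remainder tends to zero so that the series converges to $e^{A+B}$. Let $M=\sup_{|s|\le1}\max(\|e^{sA}\|,\|e^{s(A+B)}\|)$, which is finite by the bound $\|e^{sX}\|\le e^{\|X\|}$. The remainder is then bounded by $M^{N+2}\|B\|^{N+1}\cdot\frac{1}{(N+1)!}$, using $\int_{\Delta_{N+1}}d\sigma=1/(N+1)!$ from Lemma~\ref{lemma.petrov}. This bound tends to zero. The same estimate shows that the $n$-th term has norm at most $M^{n+1}\|B\|^n/n!$, so the series converges absolutely.

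As a cross-check in the bounded case, one can expand every $e^{\tau_iA}$ as a power series and apply Lemma~\ref{lemma.petrov}. The coefficient of each word in $A,B$ with $n$ letters $B$ and $k$ letters $A$ comes out as $1/(n+k)!$ after summing over the distributions $k_0+\dots+k_n=k$, which matches the expansion of $e^{A+B}$ word by word. I would keep the Duhamel argument as the main proof, however, because it adapts better to the unbounded generators used in Corollary~\ref{thm.index-beta}.
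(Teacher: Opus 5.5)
Your proof is correct, but it takes a different route from the paper. The paper's proof is exactly what you relegate to a cross-check. It inserts a parameter $\hbar$ and expands $e^{A+\hbar B}=\sum_m(A+\hbar B)^m/m!$. It then groups the words $A^{k_0}BA^{k_1}\cdots BA^{k_n}$ by the number $n$ of occurrences of $B$; each such word carries coefficient $1/(n+\sum_ik_i)!$. Using the full Dirichlet integral of Lemma~\ref{lemma.petrov}, it rewrites this coefficient as $\frac{1}{k_0!\cdots k_n!}\int_{\Delta_n}\tau_0^{k_0}\cdots\tau_n^{k_n}\,d\sigma_n$. Finally it resums over the $k_i$ under the integral to recover the factors $e^{\tau_iA}$.

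That argument is a pure coefficient identity, and it is the reason Lemma~\ref{lemma.petrov} is stated for arbitrary exponents. The regrouping and the interchange of sum and integral rest on absolute convergence, $\sum_m(\|A\|+\|B\|)^m/m!<\infty$, which the paper does not spell out.

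Your Duhamel iteration differs in three ways:
\begin{itemize}
\item It needs only the volume $1/n!$ of the simplex.
\item It uses only the semigroup law and the derivatives of $u\mapsto e^{uA}$ and $u\mapsto e^{u(A+B)}$ at nonnegative times.
\item It controls convergence explicitly through the remainder bound $M^{N+2}\|B\|^{N+1}/(N+1)!$.
\end{itemize}
In effect it is the unfolding of the Dyson--Phillips recursion of Lemma~\ref{lem:DP.exp}. It therefore gives Lemma~\ref{lem:pert.exp} and the bounded-perturbation case of Corollary~\ref{cor:perturbative} in one stroke.

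Your argument carries over essentially verbatim to an unbounded generator $A$ with $B$ bounded. There $M$ comes from the exponential bound of the semigroup rather than from $e^{\|A\|}$. The paper's word expansion makes no sense in that setting.

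It does not carry over verbatim to the merely relatively bounded $F_1$ of Corollary~\ref{thm.index-beta}. Your remainder estimate uses $\|B\|<\infty$, so it would have to be replaced by estimates of the type $\|F_1e^{sF_0}\|=O(s^{-1/2})$. The advantage you claim lies in the mechanism, not in the estimate.

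A minor point: in a Banach algebra the convergence hypotheses of the lemma are automatic, since $\|X^n\|\le\|X\|^n$. That, rather than convergence at $s=1$, is the real reason for the absolute convergence you invoke for $|s|\le 1$.
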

\begin{proof}
We prove the identity
\[
e^{A+\hbar B}=\sum_{n=0}^\infty \hbar^{\,n}\int_{\Delta_n}e^{\tau_0A}Be^{\tau_1A}\cdots Be^{\tau_nA}\,d\sigma_n
\]
of formal (indeed convergent) power series in $\hbar$; the statement follows by setting $\hbar=1$. Expanding $e^{A+\hbar B}=\sum_{m\ge 0}\frac{1}{m!}(A+\hbar B)^m$ and collecting, for fixed $n$, the words with exactly $n$ occurrences of $B$,
\[
[\hbar^n]\,e^{A+\hbar B}=\sum_{k_0,\dots,k_n\geq 0}\frac{1}{(n+k_0+\cdots +k_n)!}\,A^{k_0}BA^{k_1}B\cdots A^{k_{n-1}}BA^{k_n},
\]
where $m=n+\sum_i k_i$. By Lemma~\ref{lemma.petrov},
\[
\frac{1}{(n+\sum_i k_i)!}=\frac{1}{k_0!\cdots k_n!}\int_{\Delta_n}\tau_0^{k_0}\cdots\tau_n^{k_n}\,d\sigma_n ,
\]
so that
\begin{align*}
[\hbar^n]\,e^{A+\hbar B}
=&\int_{\Delta_n}\sum_{k_0,\dots,k_n\geq 0}\frac{(\tau_0A)^{k_0}B(\tau_1A)^{k_1}B\cdots B(\tau_nA)^{k_n}}{k_0!\cdots k_n!}\,d\sigma_n\\
=&\int_{\Delta_n}e^{\tau_0A}B\cdots Be^{\tau_nA}\,d\sigma_n.
\end{align*}
\end{proof}

\subsection{Semigroups and exponentials}\label{subsec:semigroups}

\begin{definition}\label{def.semigroup}
A (one-parameter) \emph{semigroup} in a Banach algebra $R$ is a strongly continuous representation of the monoid $(\R_{\ge0},+)$, i.e.\ a strongly continuous map $H\colon (\R_{\ge0},+)\to (R,\cdot)$ with $H_{t+s}=H_t\cdot H_s$ for all $t,s\ge 0$ and $H_0=1$. When the limit
\[
\lim_{u\to0^+}\frac{H_{u}-H_0}{u}
\]
exists (in $R$), we call it the \emph{generator} of the semigroup. A strongly continuous one-parameter semigroup is uniquely determined by its generator \cite[Ch.~II]{EngelNagel}.
\end{definition}

\begin{lemma}\label{lem:exp-semigroup}
If the exponential series $\exp(uA)\coloneqq \sum_{n=0}^\infty u^nA^n/n!$ converges for all $u\ge 0$, then $u\mapsto\exp(uA)$ is a one-parameter semigroup with generator $A$.
\end{lemma}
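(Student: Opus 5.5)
We need to show three things for $E(u)\coloneqq\exp(uA)=\sum_{n\ge0}u^nA^n/n!$: it is a homomorphism from $(\R_{\ge0},+)$ to $(R,\cdot)$ with $E(0)=1$, it is (strongly) continuous, and its generator is $A$. The plan is to use the Banach algebra structure of $R$ throughout. Convergence of the series for a single $u>0$ already gives convergence for all $u\ge0$, since in a Banach algebra $\sum\|u^nA^n\|/n!\le\sum u^n\|A\|^n/n!=e^{u\|A\|}$. So the series converges absolutely in norm, locally uniformly in $u$. (If one only had convergence without absolute convergence, the first step would be to upgrade to absolute convergence; but in a Banach algebra the norm estimate makes this automatic.)

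\emph{Semigroup law.} $E(0)=1$ is immediate from the series. For $t,s\ge0$, both series converge absolutely, so the Cauchy product is justified:
\[
E(t)E(s)=\sum_{m\ge0}\sum_{j+k=m}\frac{t^js^k}{j!\,k!}A^{j}A^{k}=\sum_{m\ge0}\frac{(t+s)^m}{m!}A^m=E(t+s).
\]
Here we use only that $A$ commutes with itself and apply the binomial theorem for the commuting scalars $t,s$.

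\emph{Continuity and generator.} The norm estimate gives
\[
\|E(u)-E(v)\|\le\sum_{n\ge1}\frac{|u^n-v^n|}{n!}\|A\|^n ,
\]
which tends to $0$ as $v\to u$ by dominated convergence on bounded intervals. This is norm continuity, and norm continuity implies strong continuity. For the generator, write
\[
\frac{E(u)-1}{u}-A=\sum_{n\ge2}\frac{u^{n-1}A^n}{n!},
\]
whose norm is at most $\bigl(e^{u\|A\|}-1-u\|A\|\bigr)/u$. This tends to $0$ as $u\to0^+$. Hence the limit in Definition~\ref{def.semigroup} exists and equals $A$.

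\emph{Main obstacle.} The only delicate point is justifying the exchange of sums in the Cauchy product and the termwise limits, and both rest on absolute convergence. In the applications, however, $R=\crees{B}$ need not literally be Banach, and the relevant $A$ (e.g.\ $F_0$) is typically unbounded. In that setting I would state the lemma exactly under the Banach hypothesis fixed at the start of the appendix. I would note that the same argument works verbatim in any complete locally convex algebra with jointly continuous multiplication, provided the series converges absolutely with respect to each continuous seminorm. The unbounded case is then deferred to the Miyadera perturbation results (Lemma~\ref{lem:DP.exp}), where exponentiability is taken as the defining hypothesis.
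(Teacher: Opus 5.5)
Your proof is correct and follows the same route as the paper. The paper's proof just asserts that the exponential law follows from convergence and self-commutation of $A$, and that norm continuity and $\frac{d}{du}\exp(uA)\big|_{u=0}=A$ are immediate from the series; you supply the absolute-convergence, Cauchy-product and remainder estimates it leaves implicit. Your observation that only $A^jA^k=A^{j+k}$ (associativity) is needed is also slightly cleaner than the paper's appeal to $\commu{A,A}=0$. With the paper's graded convention that identity would fail for odd $A$, and it is not needed anyway.
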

\begin{proof}
Convergence and $\commu{A,A}=0$ give $\exp(uA)\exp(vA)=\exp((u+v)A)$; norm-continuity in $u$ and $\frac{d}{du}\exp(uA)\big|_{u=0}=A$ are immediate from the series.
\end{proof}

\begin{definition}\label{def:exponentiable}
An element $A\in R$ is \emph{exponentiable} if there is a semigroup $H$ with generator $A$. In that case we write $H_u\eqqcolon e^{uA}$; by Definition~\ref{def.semigroup} this is unambiguous.
\end{definition}

It can happen that, for given $A$ and $B$, the series for $\exp(uA)$ converges but that for $\exp(u(A+B))$ does not. When $B$ is bounded (and more generally under a Miyadera-type condition on $B$), the perturbed semigroup can still be constructed via the Dyson--Phillips expansion, as follows \cite{Rhandi,EngelNagel}.

\begin{lemma}[Dyson--Phillips]\label{lem:DP.exp}
    Let $H_u=\exp(uA)$ be a semigroup with generator $A$, and let $B$ be bounded or, more generally, a Miyadera perturbation of $A$ \cite{Rhandi}. Set $H^{(0)}_u=H_u$ and, recursively,
    \[
    H^{(n+1)}_u=\int_0^u H^{(n)}_{u-s}\,B\,H^{(0)}_s\,ds .
    \]
    Then $\sum_{n=0}^\infty H^{(n)}_u$ converges and defines a semigroup with generator $A+B$; being unique, we denote it by $\exp(u(A+B))$.
\end{lemma}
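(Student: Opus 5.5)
The plan is to follow the standard Dyson--Phillips argument from semigroup theory \cite{EngelNagel,Rhandi}, in three steps: convergence of the series, the semigroup law, and identification of the generator.

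\emph{Convergence.} Fix a growth bound $\lVert H_u\rVert\le Me^{\omega u}$. If $B$ is bounded, induction on $n$ gives
\[
\lVert H^{(n)}_u\rVert\le Me^{\omega u}\,\frac{(M\lVert B\rVert u)^n}{n!}.
\]
Hence $\sum_n H^{(n)}_u$ converges in norm, uniformly for $u$ in compact sets, and each $H^{(n)}$ is strongly continuous in $u$. In the Miyadera case the integral is interpreted strongly on the domain of $A$. The Miyadera condition asks that, for some $u_0>0$ and $q<1$,
\[
\int_0^{u_0}\lVert BH_s x\rVert\,ds\le q\lVert x\rVert .
\]
From it one gets $\lVert H^{(n)}_u\rVert\le Mq^n$ for $u\le u_0$, so the series converges geometrically there. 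The semigroup law proved next then extends the convergence to all $u\ge0$.

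\emph{Semigroup law.} Put $T_u=\sum_n H^{(n)}_u$. The recursion is equivalent to the Duhamel (variation-of-constants) equation
\[
T_u=H_u+\int_0^u T_{u-s}\,B\,H_s\,ds .
\]
The recursion as written puts $B$ to the right of $H^{(n)}$. A second induction shows it can equivalently be written with $H^{(0)}_{u-s}BH^{(n)}_s$, since both forms equal the integral over the simplex $\{s_1+\dots+s_{n+1}=u\}$ of $H_{s_1}BH_{s_2}\cdots BH_{s_{n+1}}$. This is the continuous counterpart of Lemma~\ref{lem:pert.exp}.

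From the simplex form, splitting the simplex at time $v$ gives
\[
H^{(n)}_{u+v}=\sum_{k+l=n}H^{(k)}_u\,H^{(l)}_v .
\]
Summing over $n$ yields $T_{u+v}=T_uT_v$, and $T_0=H_0=1$. Strong continuity follows from the locally uniform convergence of the series.

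\emph{Generator.} Compute
\[
\frac{T_h-1}{h}=\frac{H_h-1}{h}+\frac1h\int_0^h T_{h-s}BH_s\,ds .
\]
On $\mathrm{dom}(A)$ the first term tends to $A$ and the second to $B$, by strong continuity. So the generator $C$ of $T$ extends $A+B$ on $\mathrm{dom}(A)$.

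To get $C=A+B$ exactly, I would apply the Laplace transform to the Duhamel equation. This gives
\[
R(\lambda,C)=R(\lambda,A)\sum_n\bigl(BR(\lambda,A)\bigr)^n
\]
for $\lambda$ large, with the Miyadera bound giving $\lVert BR(\lambda,A)\rVert<1$. Therefore $\lambda-C=(1-BR(\lambda,A))(\lambda-A)$ has the same domain as $A$, and $C=A+B$. Uniqueness of the semigroup with a given generator (Definition~\ref{def.semigroup}) then justifies writing $T_u=\exp(u(A+B))$.

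The step I expect to be the main obstacle is the Miyadera case. There one must justify the strong integrals on $\mathrm{dom}(A)$ and extend $H^{(n)}$ by density to bounded operators. The key estimate is
\[
\int_0^{u_0}\lVert BH^{(n)}_s x\rVert\,ds\le q^{n+1}\lVert x\rVert,
\]
proved by induction via Fubini. In the bounded case everything is routine.
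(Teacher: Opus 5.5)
Your outline is correct. It is the standard Dyson--Phillips/Miyadera--Voigt argument, which the paper does not reproduce but simply cites from \cite{Rhandi,EngelNagel}, and in the bounded case it is complete as written. In the Miyadera case you can drop the simplex representation, whose integrand $H_{s_1}BH_{s_2}\cdots BH_{s_{n+1}}$ is not pointwise defined because $H_s$ need not map $X$ into $\mathrm{dom}(A)$, and with it the estimate on $\int_0^{u_0}\lVert BH^{(n)}_s x\rVert\,ds$: prove $H^{(n)}_{u+v}=\sum_{k+l=n}H^{(k)}_uH^{(l)}_v$ by induction on $n$ directly from the recursion as stated, splitting $\int_0^{u+v}$ at $s=v$, which uses only $H_v\,\mathrm{dom}(A)\subseteq\mathrm{dom}(A)$.
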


\begin{corollary}\label{cor:perturbative}
With $A,B$ as in Lemma~\ref{lem:DP.exp},
\[
\exp(A+B)=\sum_{n=0}^\infty\int_{\Delta_n}e^{\tau_0A}Be^{\tau_1A}B\cdots Be^{\tau_nA}\,d\sigma_n .
\]
\end{corollary}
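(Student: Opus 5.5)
Setting $u=1$ in the Dyson--Phillips series of Lemma~\ref{lem:DP.exp}, I will identify the $n$-th iterate $H^{(n)}_u$ with an integral over a simplex scaled by $u$. The claimed formula then becomes the sum of these iterates.

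First, by induction on $n$, I will show that
\[
H^{(n)}_u=\int_{0\le s_1\le\cdots\le s_n\le u} H_{u-s_n}\,B\,H_{s_n-s_{n-1}}\,B\cdots B\,H_{s_1}\,ds_1\cdots ds_n .
\]
The case $n=0$ is the definition. For the step, I substitute the inductive formula for $H^{(n)}_{u-s}$ into $H^{(n+1)}_u=\int_0^u H^{(n)}_{u-s}BH_s\,ds$. Using the semigroup law $H_aH_b=H_{a+b}$ to absorb shifts, this is a relabelling of the integration variables together with Fubini for Bochner integrals.

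Second, I will change variables to gaps. Set $\tau_n=s_1$, $\tau_{n-1}=s_2-s_1$, and so on up to $\tau_0=u-s_n$. This linear map has Jacobian $1$ and sends the ordered region bijectively onto $u\Delta_n$, parametrized by $(\tau_1,\dots,\tau_n)$ exactly as in the convention for $d\sigma_n$. At $u=1$ this gives
\[
H^{(n)}_1=\int_{\Delta_n}e^{\tau_0A}Be^{\tau_1A}\cdots Be^{\tau_nA}\,d\sigma_n .
\]
Summing over $n$ and applying Lemma~\ref{lem:DP.exp}, which says that $\sum_n H^{(n)}_1$ converges to $\exp(A+B)$, yields the corollary.

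The main obstacle is justifying the integral manipulations when $B$ is only a Miyadera perturbation, so that $e^{\tau A}B$ need not be bounded uniformly. Near the faces of the simplex where some $\tau_i\to0$, the integrand may blow up. I would first handle the case of bounded $B$, where everything is norm-continuous and Fubini is immediate. For the Miyadera case, I would work strongly: apply both sides to a vector $x$ in the domain of $A$, and use the Miyadera bound $\int_0^{\delta}\lVert BH_sx\rVert\,ds\le q\lVert x\rVert$ with $q<1$. This gives absolute convergence of the iterated integrals (as in \cite{Rhandi}), so Fubini applies to the vector-valued integrands. I would then extend from the domain of $A$ to all $x$ by density and boundedness of both sides.

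If the Dyson--Phillips recursion in Lemma~\ref{lem:DP.exp} were instead ordered as $\int_0^u H_{u-s}BH^{(n)}_s\,ds$, the same argument would go through with the mirror relabelling.
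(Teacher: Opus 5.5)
Your proposal is correct and follows the paper's proof. Like the paper, you unfold the Dyson--Phillips recursion into an integral over the ordered simplex $\{0\le s_1\le\cdots\le s_n\le 1\}$, pass to the gap (barycentric) coordinates with unit Jacobian to obtain $\int_{\Delta_n}e^{\tau_0A}B\cdots Be^{\tau_nA}\,d\sigma_n$, and sum over $n$ using Lemma~\ref{lem:DP.exp}; your extra care about the Miyadera case goes beyond what the paper addresses, and the inductive step is in fact a pure translation of variables, so the semigroup law is not needed there.
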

\begin{proof}
Unfolding the recursion of Lemma~\ref{lem:DP.exp} at $u=1$ expresses $H^{(n)}_1$ as the integral of $e^{\tau_0A}B\cdots Be^{\tau_nA}$ over the ordered simplex $\{0\le s_1\le\dots\le s_n\le 1\}$; in the barycentric coordinates $\tau_i$ this is exactly $\int_{\Delta_n}e^{\tau_0A}B\cdots Be^{\tau_nA}\,d\sigma_n$. Summing over $n$ gives the claim. (When the series for $e^{A+B}$ converges, this also follows directly from Lemma~\ref{lem:pert.exp}.)
\end{proof}

\begin{corollary}[Wilcox/Duhamel formula]\label{cor:Wilcox}
    Let $t\mapsto A(t)$ be a smooth path in $R$ such that $e^{uA(t)}$ is exponentiable and smooth in $(u,t)$. Then
    \[
    \frac{d}{dt}\,e^{A(t)}=\int_0^1 e^{(1-u)A(t)}\,\dot A(t)\,e^{uA(t)}\,du .
    \]
\end{corollary}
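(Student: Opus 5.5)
The plan is to prove the Wilcox/Duhamel formula by the standard ``interpolation'' trick: fix $t$ and a small increment $h$, and compare the two semigroups $u\mapsto e^{uA(t+h)}$ and $u\mapsto e^{uA(t)}$. The key step is to consider the auxiliary function
\[
\Phi(u)\coloneqq e^{(1-u)A(t+h)}\,e^{uA(t)},\qquad u\in[0,1],
\]
so that $\Phi(0)=e^{A(t+h)}$ and $\Phi(1)=e^{A(t)}$. By the semigroup property (Definition~\ref{def.semigroup}), the derivative in $u$ of each factor is its generator composed with the factor. Indeed, $\frac{d}{du}e^{uA}=Ae^{uA}=e^{uA}A$ on the appropriate domain, and the smoothness hypothesis on $(u,t)\mapsto e^{uA(t)}$ gives the needed regularity. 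Differentiating the product therefore yields
\[
\Phi'(u)=e^{(1-u)A(t+h)}\bigl(A(t)-A(t+h)\bigr)e^{uA(t)}.
\]
Integrating from $0$ to $1$ then gives the exact identity
\[
e^{A(t+h)}-e^{A(t)}=\int_0^1 e^{(1-u)A(t+h)}\bigl(A(t+h)-A(t)\bigr)e^{uA(t)}\,du .
\]

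Next I will divide by $h$ and let $h\to0$. Smoothness of $A$ gives $\frac{A(t+h)-A(t)}{h}\to\dot A(t)$. Joint continuity of $(u,t)\mapsto e^{uA(t)}$ gives $e^{(1-u)A(t+h)}\to e^{(1-u)A(t)}$ uniformly in $u\in[0,1]$, since $[0,1]$ is compact. Passing the limit under the integral, by dominated convergence or uniform convergence of the integrand in the Banach algebra $R$, produces the claimed formula. When $A(t)$ is bounded, or when the exponential series converges, there is an alternative route: differentiate the series $\sum_m A(t)^m/m!$ term by term, using $\frac{d}{dt}A^m=\sum_{j}A^{j}\dot A A^{m-1-j}$, and then use Lemma~\ref{lemma.petrov} with $n=1$, namely $\int_0^1(1-u)^{j}u^{k}du=j!k!/(j+k+1)!$, to resum into the integral. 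This is exactly parallel to the proof of Lemma~\ref{lem:pert.exp}, and it can be used as a sanity check.

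The main obstacle is the unbounded (genuinely semigroup) case. There the product rule for $\Phi$ requires $\dot A(t)$, or the difference $A(t+h)-A(t)$, to be composed with $e^{uA(t)}$ in a way that makes sense. It also requires the middle factor to act on the range of $e^{uA(t)}$ for $u>0$, and the integrand to remain integrable as $u\to0^+$ or $u\to 1^-$. I would handle this in two steps. First, I would establish the identity applied to vectors in the domain, using strong differentiability. Second, I would invoke the standing smoothness hypothesis in $(u,t)$ to extend it by density and to justify exchanging the limit with the integral, in the same spirit as the relative-boundedness estimates $O(s^{-1/2})$ of Corollary~\ref{thm.index-beta}.
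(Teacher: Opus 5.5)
Your proof is correct, but it takes a different route from the paper's. The paper fixes $t$ and sets $B(u)=e^{-uA}\,\partial_t e^{uA}$. It computes $B'(u)=e^{-uA}\dot A\,e^{uA}$, which requires swapping $\partial_u$ and $\partial_t$ and using that $A$ commutes with $e^{-uA}$; it then integrates over $[0,1]$ and multiplies on the left by $e^{A}$. That argument is short and purely algebraic, but it has two costs. First, it presupposes that $t\mapsto e^{uA(t)}$ is already differentiable in $t$, which is where joint smoothness is used. Second, it needs the backward exponential $e^{-uA}$. This exists for $A\in R$ in a unital Banach algebra but not for a genuine semigroup, and the paper itself says so and defers that case to the literature.

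Your interpolation $\Phi(u)=e^{(1-u)A(t+h)}e^{uA(t)}$ instead gives an exact finite-difference identity. It uses only forward semigroup elements and the generator property at fixed $t$. The derivative then follows from norm continuity of $(u,t)\mapsto e^{uA(t)}$ on a compact set. So you prove that $t\mapsto e^{A(t)}$ is differentiable rather than assuming it, and you need only continuity, not smoothness, in $(u,t)$. This is the variation-of-constants argument of semigroup theory, so it is the version that would carry over to the unbounded operators of the applications.

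Your last paragraph on that unbounded setting is only a sketch (domains, density), but it is not needed for the statement as posed, where $A(t)\in R$; the paper does not prove that case either. Your series-based check via Lemma~\ref{lemma.petrov} with $n=1$ is also correct in the bounded case.
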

\begin{proof}
Fix $t$ and abbreviate $A=A(t)$, $\dot A=\dot A(t)$. Set $B(u)=e^{-uA}\,\partial_t e^{uA}$, so $B(0)=0$. Since $A$ commutes with $e^{uA}$,
\[
B'(u)=\partial_u\!\left(e^{-uA}\partial_t e^{uA}\right)
=-A e^{-uA}\partial_t e^{uA}+e^{-uA}\partial_t\!\left(A e^{uA}\right)
=e^{-uA}\dot A\, e^{uA},
\]
the two remaining terms cancelling because $e^{-uA}A=Ae^{-uA}$. Integrating from $0$ to $1$,
\[
e^{-A}\partial_t e^{A}=B(1)=\int_0^1 e^{-uA}\dot A\, e^{uA}\,du,
\]
and multiplying on the left by $e^{A}$ gives the formula. (For the operator-semigroup setting where $e^{-uA}$ need not be available, see \cite{10.1063/1.1705306}.)
\end{proof}

\subsection{Constant trace}\label{subsec:constant-trace}

We keep the assumptions of Remark~\ref{rem:analytic-assumptions} (following \cite{schwartz}): $A$ is endowed with a topology making it a separated locally convex space, and the differential $d_A\colon A\to A$ is continuous.

\begin{corollary}\label{cor:psi}
Let $\delta$ be a derivation of $C^\infty(\R_{>0},A)$, and let $\alpha(t)$, $\beta(t)$ be an even-degree and an odd-degree element, respectively, such that
\[
\delta\alpha(t)=0,\qquad \dot{\alpha}(t)=\delta\dot{\beta}(t).
\]
Assume $e^{u\alpha(t)}$ is a semigroup with generator $\alpha(t)$, and set
\[
\Psi(t)=\int_0^1 e^{(1-u)\alpha(t)}\,\dot{\beta}(t)\,e^{u\alpha(t)}\,du .
\]
Then
\[
\frac{d}{dt}\,e^{\alpha(t)}=\delta\,\Psi(t).
\]
\end{corollary}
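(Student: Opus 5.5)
The plan is to start from the Wilcox/Duhamel formula (Corollary~\ref{cor:Wilcox}) applied to the path $t\mapsto\alpha(t)$:
\[
\frac{d}{dt}\,e^{\alpha(t)}=\int_0^1 e^{(1-u)\alpha(t)}\,\dot\alpha(t)\,e^{u\alpha(t)}\,du .
\]
Then substitute $\dot\alpha(t)=\delta\dot\beta(t)$, so the integrand becomes $e^{(1-u)\alpha}\,(\delta\dot\beta)\,e^{u\alpha}$. The aim is to move $\delta$ outside each integrand and then outside the integral, which would give exactly $\delta\Psi(t)$.

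The key algebraic step is that $\delta$ annihilates $e^{s\alpha(t)}$ for every $s\ge 0$. On the exponential series this follows from the Leibniz rule: $\alpha$ is even, so no signs occur, and $\delta(\alpha^n)=\sum\alpha^k(\delta\alpha)\alpha^{n-1-k}=0$. In the semigroup setting I would argue as follows. Put $K(s)=\delta e^{s\alpha}$. Differentiating in $s$ gives $K'(s)=\delta(\alpha e^{s\alpha})=\alpha K(s)$, since $\delta\alpha=0$. Together with $K(0)=\delta 1=0$, uniqueness of solutions to the abstract Cauchy problem forces $K\equiv 0$. Once this is established, the graded Leibniz rule gives
\[
\delta\bigl(e^{(1-u)\alpha}\dot\beta e^{u\alpha}\bigr)=e^{(1-u)\alpha}(\delta\dot\beta)e^{u\alpha}.
\]
Here the factor $(-1)^{|\dot\beta|}$ multiplying $e^{(1-u)\alpha}\dot\beta\,\delta(e^{u\alpha})$ is harmless, because that term vanishes anyway. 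Note also that $\dot\beta$ is odd, since $\beta$ is odd and $d/dt$ preserves degree.

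The final step is to commute $\delta$ with $\int_0^1 du$. I would assume, as in the standing conventions of this subsection, that $\delta$ is continuous in the locally convex topology, in the same way $d_A$ is. The integral is a Riemann/Bochner integral of a continuous map, so a continuous linear $\delta$ passes through it and yields $\frac{d}{dt}e^{\alpha(t)}=\delta\Psi(t)$.

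I expect the main obstacle to be justifying $\delta e^{s\alpha}=0$ when $\alpha$ is only a semigroup generator rather than an element with a convergent series. That argument needs $\delta$ to be closable or continuous, and it needs $e^{s\alpha}$ to lie in the domain of $\delta$. If this cannot be secured directly, I would fall back on approximating $e^{s\alpha}$ by $(1-s\alpha/n)^{-n}$, or by the Dyson--Phillips iterates of Lemma~\ref{lem:DP.exp}, on each of which the Leibniz computation is exact, and then pass to the limit using continuity of $\delta$.
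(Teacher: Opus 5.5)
Your proposal is correct and follows the paper's proof: apply the Wilcox/Duhamel formula (Corollary~\ref{cor:Wilcox}), substitute $\dot\alpha=\delta\dot\beta$, use $\delta(e^{s\alpha})=0$ with the Leibniz rule to pull $\delta$ out of the integrand, and then commute $\delta$ with $\int_0^1 du$. The paper justifies $\delta(e^{s\alpha})=0$ only through $\delta(\alpha^k)=0$ on the exponential series, and it assumes without comment that $\delta$ commutes with the integral; your Cauchy-problem argument and your explicit continuity assumption on $\delta$ make these two points precise.
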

\begin{proof}
Since $\delta$ is a derivation with $\delta\alpha=0$ and $\alpha$ is even, $\delta(\alpha^k)=0$ for all $k$, hence $\delta(e^{u\alpha})=0$. Using Corollary~\ref{cor:Wilcox} and $\dot\alpha=\delta\dot\beta$,
\begin{align*}
\frac{d}{dt}e^{\alpha(t)}
&=\int_0^1 e^{(1-u)\alpha(t)}\dot{\alpha}(t)e^{u\alpha(t)}\,du
=\int_0^1 e^{(1-u)\alpha(t)}\,\delta\dot{\beta}(t)\,e^{u\alpha(t)}\,du\\
&=\int_0^1 \delta\!\left(e^{(1-u)\alpha(t)}\dot{\beta}(t)e^{u\alpha(t)}\right)du
=\delta\int_0^1 e^{(1-u)\alpha(t)}\dot{\beta}(t)e^{u\alpha(t)}\,du=\delta\,\Psi(t),
\end{align*}
where the third equality uses that $\delta$ is a derivation and $\delta(e^{u\alpha(t)})=0$ (so the two exponential factors pass through $\delta$), the fourth that $\delta$ commutes with the integral.
\end{proof}

\begin{lemma}\label{lemma:deltabeta}
Let $\beta(t)$ be an odd-degree element of $\crees{A}$, and let
\[
\alpha(t)=d_A\beta(t)+\beta(t)^2 .
\]
Let $\delta_\beta$ be the odd derivation of $C^\infty(\R_{>0},A)$ given by $\delta_\beta=d_A+\commu{\beta(t),-}$. Then $\alpha(t)\in\crees{A}$, $\delta_\beta$ restricts to a derivation of $\crees{A}$, and
\[
\delta_\beta\alpha(t)=0,\qquad \dot{\alpha}(t)=\delta_\beta\dot{\beta}(t).
\]
\end{lemma}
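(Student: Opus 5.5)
The plan is to prove the three claims of Lemma~\ref{lemma:deltabeta} in turn. First, that $\alpha(t)$ and $\delta_\beta$ preserve $\crees{A}$. Then the Bianchi identity $\delta_\beta\alpha=0$. Finally the variation formula $\dot\alpha=\delta_\beta\dot\beta$.

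\emph{Step 1: closure under $\crees{A}$.} I would use that $\crees{A}$ is a differential graded algebra (Proposition~\ref{prop:rees-of-fdga}). Since $d_A$ is continuous and filtered, it acts termwise on the Laurent expansion $\beta(t)=\sum_n\beta_nt^n$, and each $d_A\beta_n$ stays in $F^nA$. The product $\beta(t)^2$ lies in $\crees{A}$ by the lax monoidal structure of Proposition~\ref{prop.smooth-monoidal}. Hence $\alpha(t)\in\crees{A}$. For the same reasons, $\commu{\beta(t),x(t)}=\beta x-(-1)^{|x|}x\beta$ lies in $\crees{A}$ whenever $x\in\crees{A}$. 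So $\delta_\beta$ restricts to $\crees{A}$.

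\emph{Step 2: $\delta_\beta$ is an odd derivation.} $d_A$ is an odd derivation. Graded commutation with the odd element $\beta$ is an odd derivation by the graded Jacobi identity. A sum of odd derivations is an odd derivation.

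\emph{Step 3: Bianchi identity.} I compute
\[
\delta_\beta\alpha=d_A(d_A\beta)+d_A(\beta^2)+\commu{\beta,d_A\beta}+\commu{\beta,\beta^2}.
\]
The first term vanishes since $d_A^2=0$. Since $d_A\beta$ is even, $\commu{\beta,d_A\beta}=\beta\,d_A\beta-d_A\beta\,\beta$. The Leibniz rule for odd $\beta$ gives $d_A(\beta^2)=d_A\beta\,\beta-\beta\,d_A\beta$. These two terms therefore cancel. Finally, $\beta^2$ is even, so $\commu{\beta,\beta^2}=\beta^3-\beta^3=0$. Hence $\delta_\beta\alpha=0$.

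\emph{Step 4: variation formula.} Differentiating in $t$ commutes with the continuous operator $d_A$. The derivative of $\beta^2$ is $\dot\beta\beta+\beta\dot\beta$, since the product is continuous and bilinear, so the Leibniz rule holds for $t$-derivatives of smooth maps. Thus $\dot\alpha=d_A\dot\beta+\dot\beta\beta+\beta\dot\beta$. On the other hand, $\dot\beta$ is odd, so $\commu{\beta,\dot\beta}=\beta\dot\beta+\dot\beta\beta$. This gives $\delta_\beta\dot\beta=d_A\dot\beta+\beta\dot\beta+\dot\beta\beta=\dot\alpha$.

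The only mildly delicate point is analytic: justifying that $t$-differentiation passes through $d_A$ and through products in $C^\infty(\R_{>0},A)$. I would use the standing continuity assumptions of Remark~\ref{rem:analytic-assumptions} and continuity of $d_A$ and of the multiplication. The algebra itself is routine sign bookkeeping.

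Note that the identity $\dot\alpha=\delta_\beta\dot\beta$ is claimed in $C^\infty(\R_{>0},A)$. The element $\dot\beta$ need not lie in $\crees{A}$, since differentiation shifts $t$-degrees. This is why $\delta_\beta$ is defined on $C^\infty(\R_{>0},A)$ in the first place.
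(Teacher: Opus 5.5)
Your proof is correct and follows the paper's argument essentially line by line. Closure of $\crees{A}$ under $d_A$ and products gives $\alpha(t)\in\crees{A}$ and lets $\delta_\beta$ restrict; the Bianchi identity is the same direct sign computation, using $d_A^2=0$ and the cancellation of the $\beta^3$ terms; and $\dot\alpha=\delta_\beta\dot\beta$ comes from the Leibniz rule in $t$ together with $\commu{\beta,\dot\beta}=\beta\dot\beta+\dot\beta\beta$. Your extra observation is also accurate and is left implicit in the paper: $\dot\beta$, whose coefficient of $t^{n-1}$ is $n\beta_n\in F^nA$, generally lies only in $C^\infty(\R_{>0},A)$ and not in $\crees{A}$, which is why the variation identity must be read there.
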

\begin{proof}
$\crees{A}$ is a differential graded subalgebra of $C^\infty(\R_{>0},A)$ (Proposition~\ref{prop:rees-of-fdga}), so $\alpha(t)\in\crees{A}$ and $\delta_\beta$ restricts to it. We compute, using $d_A^2=0$ and that $\beta$ is odd:
\begin{align*}
\delta_\beta\alpha
&=d_A\alpha+\commu{\beta,\alpha}\\
&=d_A(d_A\beta+\beta^2)+\bigl(\beta(d_A\beta+\beta^2)-(d_A\beta+\beta^2)\beta\bigr)\\
&=\bigl((d_A\beta)\beta-\beta\,d_A\beta\bigr)+\bigl(\beta\,d_A\beta-(d_A\beta)\beta\bigr)=0 ,
\end{align*}
where $\commu{\beta,\alpha}=\beta\alpha-\alpha\beta$ because $\alpha$ is even, and $\beta^3$ cancels. For the second identity,
\[
\dot\alpha=\frac{d}{dt}(d_A\beta+\beta^2)=d_A\dot\beta+\dot\beta\beta+\beta\dot\beta
=d_A\dot\beta+\commu{\beta,\dot\beta}=\delta_\beta\dot\beta,
\]
since $\commu{\beta,\dot\beta}=\beta\dot\beta+\dot\beta\beta$ ($\beta,\dot\beta$ both odd).
\end{proof}

\begin{corollary}\label{cor:curvature-derivative}
With $\beta(t)$ and $\alpha(t)$ as in Lemma~\ref{lemma:deltabeta}, $\Psi(t)=\int_0^1 e^{(1-u)\alpha(t)}\dot{\beta}(t)e^{u\alpha(t)}\,du$, one has
\[
\frac{d}{dt}\,e^{\alpha(t)}=d_A\Psi(t)+\commu{\beta(t),\Psi(t)}.
\]
\end{corollary}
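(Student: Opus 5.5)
This is a direct combination of Lemma~\ref{lemma:deltabeta} and Corollary~\ref{cor:psi}. I will take $\delta=\delta_\beta=d_A+\commu{\beta(t),-}$, viewed as an odd derivation of $C^\infty(\R_{>0},A)$. The plan has three steps.

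First, I will check that $\delta_\beta$ is a derivation of $C^\infty(\R_{>0},A)$ in the sense Corollary~\ref{cor:psi} uses. The differential $d_A$ acts pointwise in $t$ and is continuous, so it is a graded derivation of the pointwise product. The map $\commu{\beta(t),-}$ is the graded commutator with a smooth odd family, hence also a graded derivation, and it preserves smoothness because multiplication is continuous. Their sum is therefore an odd derivation.

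Second, I will invoke Lemma~\ref{lemma:deltabeta}. It gives precisely the two hypotheses of Corollary~\ref{cor:psi}: $\delta_\beta\alpha(t)=0$ and $\dot\alpha(t)=\delta_\beta\dot\beta(t)$. The lemma also says $\alpha(t)$ is even, which Corollary~\ref{cor:psi} requires. The semigroup hypothesis on $e^{u\alpha(t)}$ is the standing assumption here, since $\Psi$ is defined through it. Corollary~\ref{cor:psi} then gives
\[
\frac{d}{dt}e^{\alpha(t)}=\delta_\beta\Psi(t).
\]

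Third, I will unfold the definition of $\delta_\beta$: $\delta_\beta\Psi=d_A\Psi+\commu{\beta(t),\Psi(t)}$. Here $\Psi$ is odd, being an integral of products even$\cdot$odd$\cdot$even, so the graded commutator is the anticommutator, matching the sign convention $\commu{x,y}=xy-(-1)^{|x||y|}yx$.

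The only delicate point is in the proof of Corollary~\ref{cor:psi}: the derivation must pass through the exponentials, $\delta(e^{u\alpha})=0$, and commute with the $u$-integral. Both are justified by continuity of $d_A$ and of multiplication, together with $e^{u\alpha}$ being a limit (series or Dyson--Phillips) of polynomials in $\alpha$, each of which $\delta_\beta$ kills. I would take this exactly as established in Corollary~\ref{cor:psi} and not reprove it.
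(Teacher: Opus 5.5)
Your proposal is correct and is exactly the paper's argument: the paper's proof is the one-line observation that Corollary~\ref{cor:psi}, applied with $\delta=\delta_\beta$, has its hypotheses $\delta_\beta\alpha=0$ and $\dot\alpha=\delta_\beta\dot\beta$ supplied by Lemma~\ref{lemma:deltabeta}, after which $\delta_\beta\Psi=d_A\Psi+\commu{\beta(t),\Psi(t)}$ by definition. One small correction: Lemma~\ref{lemma:deltabeta} does not itself state that $\alpha$ is even; this follows directly from $\beta$ being odd, which is all Corollary~\ref{cor:psi} needs.
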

\begin{proof}
Immediate from Corollary~\ref{cor:psi} applied to $\delta=\delta_\beta$ and Lemma~\ref{lemma:deltabeta}.
\end{proof}

\begin{lemma}\label{lem:closed-trace}
Let $\awt{A}$ be a filtered differential graded algebra with trace and $\strawt{A}$ the corresponding coefficient-wise Rees algebra with trace. Let $\beta(t)\in\crees{A}$ be odd and $\alpha(t)=d_A\beta(t)+\beta(t)^2$. Assume $\alpha(t)$ is exponentiable and $e^{s\alpha(t)}\in\creestr{A}$ for every $s>0$, so that $\tr(e^{\alpha(t)})$ is defined. Then $\overline{e^{\alpha(t)}}$ is a cocycle of the commutator quotient $\overline{\creestr{A}}$; in particular $d_P\tr(e^{\alpha(t)})=0$, so that $[\tr(e^{\alpha(t)})]$ is a class in $H(\crees{P})$, and
\[
\tr_{H}\lrs*{\overline{e^{\alpha(t)}}}=\lrs*{\tr(e^{\alpha(t)})}
\]
by Proposition~\ref{prop:universal-trace}.
\end{lemma}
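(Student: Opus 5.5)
The aim is to show that $d_\beta$-closedness of $e^{\alpha}$, with $d_\beta\coloneqq\delta_\beta=d_A+\commu{\beta,-}$, becomes honest $d_A$-closedness once commutators are quotiented out. The trace itself plays no role beyond vanishing on commutators. Throughout, fix $t>0$ and write $\alpha=\alpha(t)$, $\beta=\beta(t)$; all statements are pointwise in $t$ and hence hold in $\creestr{A}$.

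\emph{Step 1: $\delta_\beta$ annihilates the exponential.} By Lemma~\ref{lemma:deltabeta}, $\delta_\beta\alpha=0$. As in the proof of Corollary~\ref{cor:psi}, since $\delta_\beta$ is a derivation and $\alpha$ is even, $\delta_\beta(\alpha^k)=0$ for all $k$. To pass to $e^{s\alpha}$ when it is only a semigroup and not a convergent series, I would use an ODE argument. Set $X(s)=\delta_\beta(e^{s\alpha})$. Then
\[
\partial_s X(s)=\delta_\beta(\alpha e^{s\alpha})=\alpha X(s),\qquad X(0)=0 .
\]
The second equality uses $\delta_\beta\alpha=0$ and the Leibniz rule. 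Uniqueness of solutions of this Cauchy problem for the generator $\alpha$ (via \cite[Ch.~II]{EngelNagel}) gives $X\equiv0$. Alternatively, one can simply invoke the identity $\delta(e^{u\alpha})=0$ already used in Corollary~\ref{cor:psi}. This yields
\[
d_A e^{\alpha}=-\commu{\beta,e^{\alpha}}.
\]

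\emph{Step 2: the right-hand side is a commutator in the ideal.} Since $e^{\alpha}\in\creestr{A}$ by hypothesis, $\commu{\beta,e^{\alpha}}$ lies in $\commu{\crees{A},\creestr{A}}$. Hence $d\,\overline{e^{\alpha}}=0$ in $\overline{\creestr{A}}$, where the differential is well defined by Proposition~\ref{prop:universal-trace}(i), applied to the algebra with trace $\creestr{A}$. The ideal and commutator properties needed for this are Proposition~\ref{prop.they-coincide}(1)--(2).

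\emph{Step 3: conclude on the trace.} The trace is a cochain map vanishing on commutators (Proposition~\ref{prop.they-coincide}(2)), so
\[
d_P\tr(e^\alpha)=\tr(d_Ae^\alpha)=-\tr\commu{\beta,e^\alpha}=0 .
\]
Proposition~\ref{prop:universal-trace}(iii) then gives the class $[\tr(e^{\alpha(t)})]$ and the stated formula for $\tr_H$.

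The only genuine obstacle is Step 1 in the unbounded setting. One must justify that $d_A$ and $\commu{\beta,-}$ can be applied to $e^{s\alpha}$ and commute with the semigroup, and that the uniqueness argument applies. I would handle this by relying on the continuity of $d_A$ assumed in this subsection together with the smoothness hypotheses implicit in Corollary~\ref{cor:Wilcox}.
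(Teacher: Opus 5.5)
Your proof is correct and follows essentially the paper's argument. The paper computes $d_A\alpha=\commu{\alpha,\beta}$ directly, which is your $\delta_\beta\alpha=0$ from Lemma~\ref{lemma:deltabeta} since $\alpha$ is even; it then propagates this to $d_Ae^{\alpha}=-\commu{\beta,e^{\alpha}}$ via the derivation property and concludes exactly as in your Steps 2--3. Your ODE-uniqueness justification for passing from $\alpha$ to the semigroup $e^{s\alpha}$ is a useful addition: the paper simply asserts that step.
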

\begin{proof}
Since $d_A^2=0$ and $\beta$ is odd,
\[
d_A\alpha=d_A(d_A\beta+\beta^2)=(d_A\beta)\beta-\beta(d_A\beta)=\commu{d_A\beta,\beta}=\commu{\alpha,\beta},
\]
using $\commu{\beta,\beta^2}=\beta^3-\beta^3=0$. As $\alpha$ is even, $\commu{-,\beta}$ is a derivation and $d_A$ a derivation, so
\[
d_A\,e^{\alpha}=\commu{e^{\alpha},\beta}\quad\text{i.e.}\quad d_A e^\alpha=-\commu{\beta,e^\alpha}.
\]
The right-hand side lies in $\commu{\crees{A},\creestr{A}}$, so $d_Ae^{\alpha}$
vanishes in $\overline{\creestr{A}}$: that is, $\overline{e^{\alpha(t)}}$ is a cocycle
there, and Proposition~\ref{prop:universal-trace} gives the last assertion.
Therefore, since $\tr$ is a cochain map vanishing on graded commutators,
\[
d_P\tr(e^{\alpha})=\tr(d_A e^{\alpha})=-\tr\bigl(\commu{\beta,e^{\alpha}}\bigr)=0. \qedhere
\]
\end{proof}

\begin{corollary}\label{cor:t-invariance}
With $\beta(t)\in\crees{A}$ and $\alpha(t)=d_A\beta(t)+\beta(t)^2$ as above
(and under the same traceability hypotheses, together with
$\Psi(t)\in\creestr{A}$), the class of $\overline{e^{\alpha(t)}}$ in
$H\lr*{\overline{\creestr{A}}}$ is independent of $t$,
\[
\frac{d}{dt}\lrs*{\overline{e^{\alpha(t)}}}=0 ,
\]
and consequently, applying the induced trace $\tr_{H}$ of
Proposition~\ref{prop:universal-trace},
\[
\frac{d}{dt}\,\tr_{H}\lrs*{\overline{e^{\alpha(t)}}}=
\frac{d}{dt}\bigl[\tr(e^{\alpha(t)})\bigr]=0
\]
in $H(\crees{P})$.
\end{corollary}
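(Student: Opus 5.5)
The plan is to differentiate $e^{\alpha(t)}$ in $t$ and show that the derivative is a coboundary in the commutator quotient $\overline{\creestr{A}}$. For this I would apply Corollary~\ref{cor:curvature-derivative}, which gives
\[
\frac{d}{dt}\,e^{\alpha(t)}=d_A\Psi(t)+\commu{\beta(t),\Psi(t)},
\qquad
\Psi(t)=\int_0^1 e^{(1-u)\alpha(t)}\dot\beta(t)\,e^{u\alpha(t)}\,du .
\]
By hypothesis $\Psi(t)\in\creestr{A}$. Since $\beta(t)\in\crees{A}$, the commutator $\commu{\beta,\Psi}$ lies in $\commu{\crees{A},\creestr{A}}$, so it vanishes in $\overline{\creestr{A}}$. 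Hence
\[
\frac{d}{dt}\,\overline{e^{\alpha(t)}}=\overline{d_A\Psi(t)}=d\,\overline{\Psi(t)}
\]
in $\overline{\creestr{A}}$. The second equality uses Proposition~\ref{prop:universal-trace}(i), which says that $d_A$ descends to the quotient.

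Next I would use Lemma~\ref{lem:closed-trace}, which shows that $\overline{e^{\alpha(t)}}$ is a cocycle, so its class is defined. The derivative $\frac{d}{dt}$ commutes with $d_A$: pointwise differentiation in $t$ commutes with a continuous $t$-independent differential, and it preserves commutators up to the Leibniz rule, so it descends to the quotient. Since we have just shown that the derivative of the cocycle is the coboundary $d\,\overline{\Psi(t)}$, the class $[\overline{e^{\alpha(t)}}]$ has vanishing derivative in $H(\overline{\creestr{A}})$.

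Applying the cochain map $\overline{\tr}$ then gives
\[
\frac{d}{dt}\tr(e^{\alpha(t)})=\tr\bigl(d_A\Psi(t)\bigr)+\tr\bigl(\commu{\beta,\Psi}\bigr)=d_P\tr(\Psi(t))+0 .
\]
This uses that the trace kills commutators and is a cochain map. The trace also commutes with $\frac{d}{dt}$ because it is linear and continuous, and it commutes with the $u$-integral defining $\Psi$ for the same reason. So $\frac{d}{dt}[\tr(e^{\alpha(t)})]=[d_P\tr\Psi(t)]=0$ in $H(\crees{P})$, and by Proposition~\ref{prop:universal-trace}(iii) this equals $\frac{d}{dt}\tr_H[\overline{e^{\alpha(t)}}]$.

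The main obstacle is analytic rather than algebraic. One must make sense of differentiating the class in $t$, that is, justify interchanging $\frac{d}{dt}$ with $d_A$, with the trace and with the integral over $u$. This requires $e^{u\alpha(t)}$ to be smooth in $(u,t)$, which is needed to invoke Corollary~\ref{cor:Wilcox} inside Corollary~\ref{cor:curvature-derivative}. I would take this smoothness as part of the exponentiability/traceability hypotheses, and use the continuity of $d_A$ and of $\tr_A$ from Remark~\ref{rem:analytic-assumptions} to pass these operators under derivatives and integrals.
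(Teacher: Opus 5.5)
Your proposal is correct and follows the paper's proof essentially step for step. You apply Corollary~\ref{cor:curvature-derivative}, discard $\commu{\beta(t),\Psi(t)}$ in $\overline{\creestr{A}}$ (using $\Psi(t)\in\creestr{A}$) so that $\frac{d}{dt}\overline{e^{\alpha(t)}}=d\,\overline{\Psi(t)}$ is a coboundary, and then apply the trace to obtain $\bigl[d_P\tr\Psi(t)\bigr]=0$ in $H(\crees{P})$. One remark, loose in the paper too: $d/dt$ does not descend to $\overline{\creestr{A}}$ in general, since differentiation lowers the filtration index and so need not preserve $\crees{A}$. You do not need it, though, because the particular element $\frac{d}{dt}e^{\alpha(t)}=d_A\Psi(t)+\commu{\beta(t),\Psi(t)}$ already lies in $\creestr{A}$ and is exact modulo $\commu{\crees{A},\creestr{A}}$.
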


\begin{proof}
By Corollary~\ref{cor:curvature-derivative} we have
$\frac{d}{dt}e^{\alpha(t)}=d_A\Psi(t)+\commu{\beta(t),\Psi(t)}$, whose second term
vanishes in $\overline{\creestr{A}}$; hence
$\frac{d}{dt}\overline{e^{\alpha(t)}}=d_A\overline{\Psi(t)}$ is a coboundary, which is
the first assertion, and applying $\tr_{H}$ gives the second. Concretely, without
passing through the quotient: the operator $d/dt$ commutes with $d_A$, hence descends to $H(\crees{P})$. Using Corollary~\ref{cor:curvature-derivative} and that $\tr$ vanishes on commutators and commutes with $d_A$,
\begin{align*}
\frac{d}{dt}\bigl[\tr(e^{\alpha(t)})\bigr]
&=\Bigl[\tr\bigl(\tfrac{d}{dt}e^{\alpha(t)}\bigr)\Bigr]
=\Bigl[\tr\bigl(d_A\Psi(t)+\commu{\beta(t),\Psi(t)}\bigr)\Bigr]\\
&=\bigl[\tr(d_A\Psi(t))\bigr]
=\bigl[d_A\tr(\Psi(t))\bigr]=0 .\qedhere
\end{align*}
\end{proof}

\begin{remark}\label{rem:sense-of-constant}
The element $e^{\alpha(t)}$ has no class in $H\lr*{\creestr{A}}$, since it is not a
$d_A$-cocycle; it is closed only modulo commutators, which is exactly what the
commutator quotient of Definition~\ref{def:commutator-quotient} records, and since a
trace vanishes on commutators nothing is lost by passing to it. It is therefore the
second of the two cohomological traces, $\tr_{H}$ rather than $\tr_{H(A)}$, that is
relevant in this appendix; see Remark~\ref{rem:two-traces}.
\end{remark}
\begin{remark}\label{rem:zero-differential}
Suppose the differential of $P$ vanishes; this is the case for $P=\K_I$ and $P=\K_c$,
the only trace targets used in Section~\ref{sec:index}. Then the differential of
$\crees{P}$ vanishes as well, so there are no coboundaries,
$H(\crees{P})=\crees{P}$, and $\tr_{H}\lrs*{\overline{e^{\alpha(t)}}}$ is simply the
element $\tr(e^{\alpha(t)})$. In this case Corollary~\ref{cor:t-invariance} states
precisely that
\[
\frac{d}{dt}\,\tr_{\crees{A}}\!\left(e^{\alpha(t)}\right)=0 ,
\]
that is, that $e^{\alpha(t)}$ has constant trace in the sense of
Definition~\ref{def:constant-trace}. This proves
Proposition~\ref{prop:constant-trace-criterion}.
\end{remark}

\begin{remark}
Corollary~\ref{cor:t-invariance} is the abstract Chern--Simons invariance underlying the ``constant trace'' hypotheses of Section~\ref{sec:index}: it guarantees that, for curvatures of the form $\alpha=d_A\beta+\beta^2$, the cohomology class of $\tr(e^{\alpha(t)})$ does not depend on $t$, so that its value at $t=1$ may be computed at $t\to 0^+$ via Lemma~\ref{lem:limit-trace}.
\end{remark}

\bibliographystyle{alpha}
\bibliography{bibliography}

\end{document}